\newtheorem{thm}{Theorem}[section]
\newtheorem{cor}[thm]{Corollary}
\newtheorem{lem}[thm]{Lemma}
\newtheorem{prop}[thm]{Proposition}
\newtheorem*{quest*}{Question}
\theoremstyle{definition}
\newtheorem{defn}[thm]{Definition}
\newtheorem{ex}[thm]{Example}
\theoremstyle{remark}
\newtheorem{rem}[thm]{Remark}
\newcommand{\ZZ}{\mathbb{Z}}
\newcommand{\RR}{\mathbb{R}}
\newcommand{\bt}{\bullet}
\newcommand{\G}{\Gamma}
\renewcommand{\L}{\Lambda}
\newcommand{\mg}{\mathfrak{g}}
\newcommand{\U}{\mathcal{U}}
\newcommand{\F}{\mathcal{F}}
\newcommand{\E}{\mathcal{E}}
\renewcommand{\P}{\mathcal{P}}
\renewcommand{\H}{\mathcal{H}}
\newcommand{\Om}{\mathcal{O}}
\newcommand{\I}{I}
\newcommand{\K}{K}
\renewcommand{\k}{\mathfrak{k}}
\DeclareMathOperator{\Lie}{Lie}
\DeclareMathOperator{\Tr}{Tr}
\DeclareMathOperator{\supp}{supp}
\DeclareMathOperator{\Dom}{Dom}
\DeclareMathOperator{\image}{Im}
\DeclareMathOperator{\SL}{SL}
\DeclareMathOperator{\GL}{GL}
\DeclareMathOperator{\SO}{SO}
\DeclareMathOperator{\so}{\mathfrak{so}}
\DeclareMathOperator{\Aut}{Aut}
\DeclareMathOperator{\Hol}{Hol}
\DeclareMathOperator{\hol}{hol}
\DeclareMathOperator{\dev}{dev}
\DeclareMathOperator{\inv}{inv}
\DeclareMathOperator{\oO}{\overline{O}}
\DeclareMathOperator{\rank}{rank}
\DeclareMathOperator{\orr}{o}
\newtheorem*{ack}{Acknowledgment}
\numberwithin{equation}{section}
\begin{document}

\title{Haefliger cohomology of complete Riemannian foliations}
\author{Hiraku Nozawa}
\email{hnozawa@fc.ritsumei.ac.jp}
\address{Department of Mathematical Sciences,
Colleges of Science and Engineering, Ritsumeikan University,
1-1-1 Nojihigashi, Kusatsu, Shiga, 525-8577, Japan}

\subjclass[2010]{57R30, 37C85, 53C12}
\keywords{Riemannian foliations, taut foliations, Haefliger cohomology, transverse cohomology, basic cohomology, Poincar\'{e} duality}
\thanks{The author is partly supported by the EPDI/JSPS/IH\'{E}S Fellowship, the Spanish MICINN grant MTM2011-25656, MTM2014-56950-P, JSPS KAKENHI Grant Number 17K14195 and Sabbatical Grant for young researchers of Ritsumeikan University.}

\begin{abstract}
Haefliger cohomology characterizes taut foliated manifolds by Haefliger's theorem. We show that Haefliger cohomology characterizes strongly tense foliated manifolds, namely, foliated manifolds which admit a Riemannian metric such that the mean curvature form of the leaves is closed and basic. We show that Haefliger cohomology is dual to invariant cohomology for complete Riemannian foliations. As an application of these results, we prove that any complete Riemannian foliation is strongly tense, which is a generalization of Dom\'{i}nguez's tenseness theorem for Riemannian foliations on closed manifolds. 
\end{abstract}

\maketitle

\tableofcontents

\addtocontents{toc}{\protect\setcounter{tocdepth}{1}}

\section{Introduction}
\subsection{Background and outline}

A foliated manifold $(M,\mathcal{F})$ is called {\em taut} if $M$ admits a metric $g$ such that every leaf of $\mathcal{F}$ is a minimal submanifold of $(M,g)$. Haefliger cohomology of foliated manifolds was introduced by Haefliger~\cite{Haefliger1980} to study tautness of compact foliated manifolds. Based on the work of Sullivan~\cite{Sullivan1979} and Rummler~\cite{Rummler1979}, Haefliger characterized tautness for a compact foliated manifold $(M,\F)$ by the existence of certain nonnegative $0$-cocycle of the Haefliger cohomology. As a consequence, he showed that tautness of compact foliated manifold $(M,\F)$ is a transverse property, which is determined by the holonomy pseudogroup. 

Remarkably, tautness of Riemannian foliations is of topological nature being characterized in terms of basic cohomology as conjectured by Carri\'{e}re in his Ph.D.\ thesis and proven by Masa~\cite{Masa1992}. Later, based on Masa's technique and the \'{A}lvarez class defined in~\cite{AlvarezLopez1992}, Dom\'{i}nguez~\cite{Dominguez1998} generalized Masa's theorem by proving that any Riemannian foliation on a compact manifold is tense. Here, recall that a foliated manifold $(M,\mathcal{F})$ is called {\em tense} if $M$ admits a metric such that the mean curvature form of $\mathcal{F}$ is basic. The tenseness naturally arises to prove the duality of cohomologies of Riemannian foliations on closed manifolds~\cite{KamberTondeur1983b,Dominguez1998}.

 In this article, we study strong tenseness of foliated manifolds, a notion stronger than tenseness which is useful to study Riemannian foliations, to characterize it with Haefliger cohomology (Theorem~\ref{thm:Hcharint}). Combining this result with the duality between Haefliger cohomology and invariant cohomology of complete Riemannian foliations (Theorems \ref{thm:masa} and \ref{thm:dfol}), we generalize Dom\'{i}nguez's tenseness theorem to complete Riemannian foliations (Corollary~\ref{cor:st}). These results are illustrated with examples of Lie foliations in Section \ref{sec:exam}. Our computation of Haefliger cohomology is applied to the localization of some Chern-Simons type invariants of Riemannian foliations in \cite{GoertschesNozawaToben2017}. Our original motivations are recent studies \cite{RoyoPrietoSaralegiWolak2008,Masa2009,RoyoPrietoSaralegiWolak2009,NozawaRoyoPrieto2014} on Riemannian foliations on open manifolds as well as some significant differences on the tautness between Riemannian foliations on closed manifolds and those on open manifolds (see Section \ref{sec:9}).

\subsection{Strong tenseness and Haefliger cohomology}\label{sec:I2}

Let us recall strong tenseness of foliated manifolds introduced in \cite{NozawaRoyoPrieto2014}: A foliated manifold $(M,\F)$ is called {\em strongly tense} if $M$ admits a Riemannian metric so that the mean curvature form $\kappa$ of the leaves of $\F$ is basic and closed. It is easy to see that  $(M,\F)$ is strongly tense and $\kappa$ is exact, then $(M,\F)$ is taut.

We will give a Haefliger type characterization of strong tenseness of foliated manifolds. Let us give the definition of Haefliger cohomology of a foliated manifold $(M,\F)$ with values in a flat vector bundle $\pi_{\E} : (\E,\nabla) \to M$. Take a total transversal $T$ of $(M,\F)$ and let $P_{T}(\F)$ be the set of all leaf paths on $(M,\F)$ which connects two points on $T$:
\[
P_{T}(\F) = \{ \,  \gamma : [0,1] \to M \mid \gamma (0), \gamma (1) \in T, \exists L \in \F, \gamma([0,1]) \subset L \, \}.
\] 
For any $\gamma \in P_{T}(\F)$, we have an open neighborhood $U_{\epsilon}$ of $\gamma (\epsilon)$ in $T$ $(\epsilon = 0,1)$ and an isomorphism $h_{\gamma} : (\E,\nabla)|_{U_{0}} \to (\E,\nabla)|_{U_{1}}$ induced by the holonomy map of $\F$ along $\gamma$. We call the pseudogroup $\Hol_{T}(\F,\E)$ on $\E|_{T}$ generated by $\{h_{\gamma}\}_{\gamma \in P_{T}(\F)}$ the {\em holonomy pseudogroup on} $T$ {\em of the pair} $(\F,\E)$. Here $\Hol_{T}(\F,\E)$ acts on $\Omega^{\bt}_{c}(T;\E|_{T})$ by pull-back. Let $\Omega^{\bt}_{c}(\Tr \F;\E)$ denote the complex $\Omega^{\bt}_{c}(T;\E|_{T}) / \Lambda^{\bt}_{c}$, where 
\[
\Lambda^{\bt}_{c} = \{\, {\scriptstyle \sum_{1 \leq i \leq k}} h_{i}^{*} \sigma_{i} - \sigma_{i} \mid h_{i} \in \Hol_{T}(\F,\E), \sigma_{i} \in \Omega^{\bt}_{c}(T;\E|_{T}), \pi^{-1}_{\E}(\operatorname{supp} \sigma_{i}) \subset \image h_{i} \, \}
\]
with the covariant derivative of $(\E,\nabla)|_{T}$ endowed with the quotient topology induced from the standard LF-topology on $\Omega^{\bt}_{c}(T;\E|_{T})$ (see~\cite[Section~9]{deRham1984}). Here $\Omega^{\bt}_{c}(\Tr \F;\E)$ does not depend on the choice of $T$. The cohomology $H^{\bt}_{c}(\Tr \F;\E)$ of this complex is called the {\em Haefliger cohomology} (or transverse cohomology) of $(M,\F)$ with values in $(\E,\nabla)$ (see Section~\ref{sec:Hcoh}). 

We will use the following terminologies in the case where $\rank \E = 1$. We say that $\xi \in \Omega^{0}_{c}(\Tr \F;\E)$ is {\em nonnegative} if a representative $\zeta$ of $\xi$ in $\Omega^{0}_{c}(T;\E|_{T})$ is nonnegative with respect to a topological trivialization of $\E$. For a nonnegative element $\xi \in \Omega^{0}_{c}(\Tr \F;\E)$, let $\supp_{+}^{\F} \xi \subset M$ be the union of leaves which intersect $\{ x \in T \mid \zeta(x) \neq 0\}$ for any nonnegative representative $\zeta \in \Omega^{0}_{c}(T;\E|_{T})$ of $\xi$. Consider a cohomology class $\omega_{\nabla} \in H^{1}(M;\RR)$ given by $\log |h(\gamma)| = \int_{\gamma}\omega_{\nabla}$ for $\forall \gamma \in \pi_{1}M$, where $h : \pi_{1}M \to \Aut(\RR) \cong \RR^{\times}$ is the holonomy homomorphism of $(\E,\nabla)$. We say that $\E$ has {\em basic holonomy} if the cohomology class $\omega_{\nabla}$ belongs to the basic cohomology $H^{1}(M,\F)$.

The following is our Haefliger type characterization of strong tenseness. 

\begin{thm}\label{thm:Hcharint}
Let $\F$ be an oriented foliation on a closed manifold $M$. The following are equivalent:
\begin{enumerate}
\item $(M,\F)$ is strongly tense.
\item There exist a topologically trivial flat real line bundle $(\E,\nabla)$ with basic holonomy over $(M,\F)$ and a nonnegative $0$-cocycle $\xi$ in $\Omega^{0}_{c}(\Tr \F;\E)$ such that $\supp_{+}^{\F}\xi = M$.
\end{enumerate}
\end{thm}

Theorem \ref{thm:Hcharint} is proven with a Rummler-Sullivan type characterization (Proposition~\ref{prop:RS}), which shows that strong tenseness is characterized with the existence of a certain characteristic form twisted by a flat real line bundle. Since the condition (ii) is invariant under the equivalence of holonomy pseudogroups of foliated manifolds, we obtain the following.

\begin{cor}\label{cor:eq}
Strong tenseness of closed foliated manifolds is a transverse property, i.e., it is determined by the equivalence class of the holonomy pseudogroup.
\end{cor}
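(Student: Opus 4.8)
The plan is to deduce the corollary from Theorem~\ref{thm:Hchar} by exploiting the fact that its characterizing condition (iii) is phrased entirely in terms of the holonomy pseudogroup, exactly in the spirit of Haefliger's treatment of tautness~\cite[Theorem~4.1]{Haefliger1980}. The only feature of condition (iii) that is not obviously transverse is the presence of the flat line bundle $(\E,\nabla)$, so the essential point will be to see that the relevant flat line bundles are themselves transverse data. Throughout I would work with transversely orientable $\F$, reducing the general case to this one by passing to the orientation double cover of $TM/T\F$, whose holonomy pseudogroup is a two-fold cover of the original and whose transverse orientability is itself a pseudogroup invariant.

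First I would pass from the unparametrized notion of strongly tenseness to the parametrized one. If $(M,\F)$ is strongly tense, then a strongly tense metric has mean curvature form $\kappa$ that is basic and closed; its class lies in $H^{1}(M,\F) \subset H^{1}(M;\RR)$ and, via $h_{1}(\E)=[\kappa]$, determines a flat line bundle $(\E,\nabla)$ that is basic in the sense of Definition~\ref{defn:bf}. Since the holonomy of this connection takes values in the positive reals, $(\E,\nabla)$ is automatically topologically trivial, and $(M,\F)$ is strongly tense with $(\E,\nabla)$ in the sense of Definition~\ref{defn:stE}. Conversely, strongly tenseness with any topologically trivial basic flat line bundle produces a strongly tense metric by Proposition~\ref{prop:tense}. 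Thus $(M,\F)$ is strongly tense if and only if there exists a topologically trivial basic flat line bundle $(\E,\nabla)$ satisfying condition (i) of Theorem~\ref{thm:Hchar}, equivalently condition (iii).

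Next I would verify that both quantified objects in this reformulation are preserved under equivalences of holonomy pseudogroups. A topologically trivial flat line bundle is basic precisely when $h_{1}(\E) \in H^{1}(M,\F)$; since the basic cohomology and the Haefliger complex $\Omega^{\bt}_{c}(\Tr \F;\E)$ are transverse invariants, such a bundle corresponds under an equivalence to a flat line bundle over the other transversal carrying the same transverse holonomy. Granting this correspondence, every remaining ingredient of condition (iii)---the total transversal $T$, the nonnegative section $\xi \in \Omega^{0}_{c}(T;\E)$, the requirement that $\supp_{+}\xi$ meet every leaf, and the equation $d_{\nabla}\xi=0$ in $\Omega^{1}_{c}(\Tr \F;\E)$---is formulated at the level of the pseudogroup and so transports verbatim.

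Combining the two steps, equivalent holonomy pseudogroups admit corresponding basic flat line bundles and satisfy condition (iii) for corresponding bundles simultaneously, whence strongly tenseness holds for one if and only if it holds for the other, and the corollary follows as in~\cite[Corollary~1 of Theorem~4.1]{Haefliger1980}. The hard part will be the transport of the basic flat line bundle: one must check that the bijection between basic flat line bundles induced by an equivalence is compatible with the identification of Haefliger complexes underlying Theorem~\ref{thm:Hchar}, so that being strongly tense with $(\E,\nabla)$ genuinely corresponds to being strongly tense with the transported bundle. Once this compatibility is in place, the remaining verifications are the same formal manipulations that make Haefliger cohomology a transverse invariant.
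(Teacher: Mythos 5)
Your overall route is the paper's own: the paper deduces Corollary~\ref{cor:eq} in two sentences by observing that condition (iii) of Theorem~\ref{thm:Hchar} is phrased purely at the level of the holonomy pseudogroup and is therefore invariant under equivalences, exactly as in Haefliger's~\cite[Corollary~1 of Theorem~4.1]{Haefliger1980}. Your second and third paragraphs---replacing ``strongly tense'' by ``strongly tense with some topologically trivial basic flat line bundle $(\E,\nabla)$'' via Proposition~\ref{prop:tense}, and transporting $(\E,\nabla)$ across an equivalence by identifying such bundles with basic closed one-forms (Remark~\ref{rem:b}), i.e.\ with $\Hol(T)$-invariant closed one-forms on a transversal---correctly spell out what the paper leaves implicit (its remark in Section~\ref{sec:2} that equivariant flat bundles and twisted Haefliger cohomology transport under equivalences of pseudogroups).

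The genuine flaw is your orientability reduction. Theorem~\ref{thm:Hchar} requires $\F$ to be an \emph{oriented} foliation, i.e.\ $T\F$ oriented---this is what characteristic forms and the integration $\fint_{\F}$ along the leaves need---and it makes no transverse orientability assumption. Passing to the orientation double cover of $TM/T\F$ therefore contributes nothing toward the hypothesis you actually have to arrange, and your argument as written fails whenever $T\F$ is non-orientable. The paper's tool for this hypothesis is Proposition~\ref{prop:fincov} applied to the leafwise orientation double cover (this is exactly how the proofs of Theorems~\ref{thm:nont} and~\ref{thm:d3} reduce to the case of oriented $\F$ and topologically trivial $\E$). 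Note, moreover, that this reduction is not purely formal from the transverse viewpoint: unlike the transverse orientation cover, whose pseudogroup is indeed determined by the original one as you say, the holonomy pseudogroup of the leafwise orientation cover is not manifestly transverse data (a holonomy germ does not remember whether the leaf path realizing it reverses the leafwise orientation), so this step needs an argument---a point the paper itself also passes over in silence. In the tangentially oriented case your proof is complete and coincides with the paper's.
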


We will show that one direction of Theorem \ref{thm:Hcharint} extends to open foliated manifolds whereas the other direction fails (see Section \ref{sec:9}). For this purpose, we will introduce the leafwise compactly supported version $H^{\bt}_{lc}(\Tr \F;\E)$ of Haefliger cohomology (see Section \ref{sec:charst}).

\begin{thm}\label{thm:domi}
Let $(M,\F)$ be a possibly open foliated manifold. If there exist a topologically trivial flat real line bundle $(\E,\nabla)$ with basic holonomy over $(M,\F)$ and a nonnegative $0$-cocycle $\xi$ in $\Omega^{0}_{lc}(\Tr \F;\E)$ such that $\supp_{+}^{\F} \xi = M$, then $(M,\F)$ is strongly tense.
\end{thm}

Here we briefly review the motivation to consider strong tenseness of foliated manifolds. As seen in the work of Kamber-Tondeur~\cite{KamberTondeur1983} and Dom\'{i}nguez~\cite{Dominguez1998}, tenseness of Riemannian foliations on closed manifolds is a cohomological property which can be regarded as a twisted version of tautness. It is crucial in their work that, on a Riemannian foliation on a closed manifold, the mean curvature form of any tense metric is closed~\cite[Eq.\ 4.4]{KamberTondeur1983}. However tenseness is not a cohomological property for general foliated manifolds. The example of a Riemannian foliation on an open manifold due to Cairns-Escobales~\cite[Example~2.4]{CairnsEscobales1997} admits a metric whose mean curvature form is basic but not closed. For such foliations, it is difficult to relate tenseness with cohomology like in the work of Haefliger~\cite{Haefliger1980}. The strong tenseness was introduced in~\cite{NozawaRoyoPrieto2014} being motivated by this fact. By Kamber-Tonduer's result \cite[Eq.~4.4]{KamberTondeur1983}, any tense Riemannian foliation on closed manifolds are strongly tense. For detailed discussion on the mean curvature and cohomology of Riemannian foliations, see \cite{AlvarezLopez1992,MarchMinOoRuh1996,Mason2000,HabibRichardson2013}.

\subsection{Haefliger cohomology of complete Riemannian foliations}\label{sec:I1}

Haefliger cohomology is of dynamical nature in general as seen in~\cite{Haefliger1980}, and it is often difficult to compute them. We will show that Haefliger cohomology of complete Riemannian foliations is isomorphic to the reduced Haefliger cohomology, which is less difficult to compute. Here the reduced cohomology $\widehat{H}^{\bt}_{c}(\Tr \F;\E)$ (resp.\ $\widehat{H}^{\bt}_{lc}(\Tr \F;\E)$) is the cohomology of the quotient of $\Omega^{\bt}_{c}(\Tr \F;\E)$ (resp.\ $\Omega^{\bt}_{lc}(\Tr \F;\E)$) by the closure of zero with respect to the LF-topology (resp. the local LF-topology). There are natural maps $H^{\bt}_{c}(\Tr \F;\E) \to \widehat{H}^{\bt}_{c}(\Tr \F;\E)$ and $H^{\bt}_{lc}(\Tr \F;\E) \to \widehat{H}^{\bt}_{lc}(\Tr \F;\E)$. For any Riemannian foliation $\F$ on a closed manifold $M$, these natural maps are isomorphisms due to Dom\'{i}nguez~\cite[Theorem~3.15]{Dominguez1998}, which is a generalization of Masa's result~\cite[Section~1]{Masa1992} in the case where $(\E,\nabla)$ is trivial and Hector's result~\cite[Theorem~1.2]{Hector1988} in the case where the structural Lie algebra of $(M,\F)$ is compact or nilpotent and $(\E,\nabla)$ is trivial. 

We need completeness of pseudogroups (see Definition~\ref{defn:comp}) to prove the isomorphism between two cohomologies, because our argument relies on Salem's structure theorem for complete Riemannian pseudogroups (Theorem \ref{thm:Molino}). We call a flat vector bundle $\E$ over a foliated manifold $(M,\F)$ {\em complete} if the holonomy pseudogroup $\Hol_{T}(\F,\E)$ on $T$ of the pair $(\F,\E)$ is complete for some total transversal $T$ of $\F$. Note that the completeness of $\E$ implies the completeness of the holonomy pseudogroup of $\F$. 

\begin{thm}\label{thm:masa}
For a Riemannian foliation $\F$ on a manifold $M$ with a complete flat vector bundle $(\E,\nabla)$, the natural maps induce isomorphisms $H^{\bt}_{c}(\Tr \F;\E) \cong \widehat{H}^{\bt}_{c}(\Tr \F;\E)$ and $H^{\bt}_{lc}(\Tr \F;\E) \cong \widehat{H}^{\bt}_{lc}(\Tr \F;\E)$.
\end{thm}
Our proof is based on a transverse version of Sarkaria's smoothing operator~\cite{Sarkaria1978} used by~\cite{Masa1992,Dominguez1998}. As a byproduct of the proof, we will show the following.
\begin{thm}\label{thm:findim}
For  a Riemannian foliation $\F$ on a manifold $M$ with a complete flat vector bundle $(\E,\nabla)$ such that the space of leaf closures $M/\overline{\F}$ is compact, we have $\dim H^{\bt}_{c}(\Tr \F;\E) < \infty$.
\end{thm}
Haefliger cohomology of foliated manifolds is a part of the $E_{2}$-terms of the associated spectral sequence  (see, for example,~\cite[Section~28]{AlvarezLopezMasa2008}). For Riemannian foliations on closed manifolds, it was proven that the whole associated spectral sequence is finite dimensional by \'{A}lvarez L\'{o}pez in the case of trivial coefficient (\cite{AlvarezLopez1989}, see also Masa \cite{Masa1992}) and by Dom\'{i}nguez \cite{Dominguez1998} in the general case with a flat vector bundle. Note that it is easy to see that, if $M/\overline{\F}$ is compact, then $H^{\bt}_{lc}(\Tr \F;\E) \cong H^{\bt}_{c}(\Tr \F;\E)$.

\subsection{Duality between Haefliger cohomology and basic cohomology}

The basic cohomology $H^{\bt}(M,\F)$ of a foliated manifold $(M,\F)$ with a flat vector bundle $(\E,\nabla)$ is the cohomology of the complex of basic forms
\[
\Omega^{\bt}(M,\F;\E)=\{ \, \sigma\in \Omega^{\bt}(M;\E)\mid \iota (X)\sigma=0, \theta(X)\sigma=0, \forall X \in C^{\infty}(T\F) \, \}
\]
with the restriction of the differential of $\Omega^{\bt}(M;\E)$, where $\theta$ denotes the Lie derivative. This cohomology is well understood and useful for Riemannian foliations, whereas it may be pathological for general foliations. 

We use the cohomology $H^{\bt}_{c}(M,\F)$ of the complex consisting of basic forms such that the projection of the support to the space of leaf closures $M/\overline{\F}$ is compact, which was introduced by Sergiescu \cite{Sergiescu1985} to study Poincar\'{e} type duality for Riemannian foliations. The following is a result on the duality between reduced Haefliger cohomology and basic cohomology.

\begin{thm}\label{thm:dfol}
For a codimension $n$ Riemannian foliation $\F$ on a manifold $M$ with a complete flat vector bundle $(\E,\nabla)$, we have 
\begin{align}
\widehat{H}^{\bt}_{c}(\Tr \F;\E) & \cong H^{n-\bt}(M,\F;\E^{*} \otimes \Om_{\nu})^{*}, \label{eq:dual1} \\
\widehat{H}^{\bt}_{lc}(\Tr \F;\E) & \cong H^{n-\bt}_{c}(M,\F;\E^{*} \otimes \Om_{\nu})^{*}, \notag 
\end{align} 
where $\Om_{\nu}$ is the orientation bundle of $TM/T\F$.
\end{thm}
In the case where $\E$ is trivial, \eqref{eq:dual1} is known~\cite[p.~598]{AlvarezLopezMasa2008}. In the case where $M$ is compact, more general duality for the whole spectral sequence associated to Riemannian foliations was proven by \'{A}lvarez L\'{o}pez~\cite{AlvarezLopez1989b} and Dom\'{i}nguez \cite{Dominguez1998}. As mentioned above, it is well known that Haefliger cohomology is a part of the spectral sequence associated with $\F$.

Theorems~\ref{thm:masa} and~\ref{thm:dfol} imply the duality between Haefliger cohomology and basic cohomology of Riemannian foliations. Combining this duality with known results on basic cohomology, we can compute $H^{\bt}_{c}(\Tr \F;\E)$ as follows. Let $\P$ be the Sergiescu's orientation sheaf of $(M,\F)$, which was introduced in~\cite[Section~1]{Sergiescu1985}. Here $\P = (\wedge^{\rank \mathcal{C}} \mathcal{C})^{+} \otimes \mathcal{O_{\nu}}$, where $(\wedge^{\rank \mathcal{C}} \mathcal{C})^{+}$ is the flat real line bundle obtained by taking the absolute value of the determinant of the Molino's commuting sheaf. Note that,  since $\P$ has basic holonomy (see \cite{AlvarezLopez1996}), if $\F$ is complete, then so is $\P$.

\begin{cor}\label{cor:inv}
For a complete codimension $n$ Riemannian foliation $\F$ on a manifold $M$ with a flat vector bundle $(\E,\nabla)$, we have the following.
\begin{enumerate}
\item If $\E$ is complete, then we have $H^{\bt}_{c}(\Tr \F;\E) \cong H^{n-\bt}_{lc}(\Tr \F; \E^{*} \otimes \P)^{*}$.
\item $H^{0}_{lc}(\Tr \F; \P) \cong \RR$. 
\item $H^{0}_{c}(\Tr \F; \P) \cong \RR$ if and only if $M/\overline{\F}$ is compact.
\end{enumerate}
\end{cor}
Corollary~\ref{cor:inv} was first proven by Hector~\cite{Hector1988} in the case where $M$ is compact, $\E$ is trivial and the structure Lie algebra of $\F$ is compact or nilpotent. Masa~\cite[Duality theorem]{Masa1992} proved it in the case where $M$ is compact and $\E$ and $\P$ are trivial. Dom\'{i}nguez~\cite[Theorem~5.2]{Dominguez1998} proved it in the case where $M$ is compact and $\E$ and $\P$ may be nontrivial.

Note that $\P$ is topologically trivial if and only if $\F$ is transversely orientable. If $\P$ is topologically trivial, then there exists a flat real line bundle $\P^{1/2}$ such that $\P^{1/2} \otimes \P^{1/2} \cong \P$. We have the following direct consequence of (ii) of the last corollary.

\begin{cor}
For a connected manifold $M$ with a transversely orientable codimension $n$ complete Riemannian foliation $\F$ whose space of leaf closures is compact, we have $H^{\bt}_{c}(\Tr \F;\P^{1/2}) \cong H^{n-\bt}_{c}(\Tr \F;\P^{1/2})^{*}$.
\end{cor}

The basic cohomology of Riemannian foliations is a topological invariant by El Kacimi Alaoui-Nicolau \cite{ElKacimiAlaouiNicolau1993}. This result was extended to invariant cohomology of Riemannian pseudogroups by \'{A}lvarez L\'{o}pez-Masa \cite[Corollary 27.3 and Theorem 29.1]{AlvarezLopezMasa2008}. Combining their results with Theorems \ref{thm:masa} and \ref{thm:dfol}, we have the following.
\begin{cor}
$H^{\bt}_{c}(\Tr \F; \E)$ and $H^{\bt}_{lc}(\Tr \F; \E)$ are topological invariants for Riemannian foliations with complete flat vector bundles. Namely, these cohomologies are isomorphic for a Riemannian foliation $(M_{i},\F_{i})$ with a complete flat vector bundle $\E_{i}$ ($i=1,2$), if there exists a homeomorphism $\varphi : (M_{1},\F_{1}) \to (M_{2},\F_{2})$ such that $h_{2} \circ \varphi_{*} =  h_{1}$, where $h_{i}$ is the holonomy homomorphism of $\E_{i}$ $(i=1,2)$.
\end{cor}

\subsection{Tenseness theorems}\label{sec:I3}

By consequences of Theorems~\ref{thm:domi}, \ref{thm:masa} and \ref{thm:dfol}, we will prove the following result, which is a generalization of tenseness theorem of Dom\'{i}nguez~\cite{Dominguez1998} for Riemannian foliations on closed manifolds. 

\begin{cor}\label{cor:st}
Any complete Riemannian foliation is strongly tense.
\end{cor}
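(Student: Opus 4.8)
The plan is to deduce the corollary directly from Theorem~\ref{thm:d3} by exhibiting, on each connected component of $M$, a basic flat line bundle satisfying the hypothesis of that theorem; the natural and in fact canonical candidate is Sergiescu's orientation sheaf $\P$ of $(M,\F)$, which exists intrinsically and requires no extra choices.

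First I would reduce to the case where $M$ is connected. Since the leaf space $M/\F$ is compact and decomposes as the disjoint union of the leaf spaces of the connected components of $M$, only finitely many components can occur, and each carries a uniform complete Riemannian foliation (the transverse Riemannian metric and the completeness of the holonomy pseudogroup restrict to each component, whose leaf space is a clopen, hence compact, subset of $M/\F$). As strongly tenseness is witnessed by the existence of a Riemannian metric whose mean curvature form is basic and closed, and such metrics may be chosen independently on each component and then assembled, it suffices to treat each connected component separately. So from now on assume $M$ connected.

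Next I would set $(\E,\nabla)=\P$. This is a flat line bundle, and it is basic: its holonomy character is determined by the transverse structure of $(M,\F)$, so $h_{1}(\P)$ lies in $H^{1}(M,\F)$ in the sense of Definition~\ref{defn:bf}; this is exactly what is meant by the assertion that $\P$ is always basic. It then remains to verify the remaining hypothesis $\widehat{H}^{0}_{c}(\Tr\F;\P)\cong\RR$ of Theorem~\ref{thm:d3}. For connected $M$, Corollary~\ref{cor:inv}-(iii) gives $H^{0}_{c}(\Tr\F;\P)\cong\RR$; and since $\F$ is uniform complete Riemannian, Theorem~\ref{thm:masa} applied with $\E=\P$ tells us that the natural map $\tau\colon H^{\bt}_{c}(\Tr\F;\P)\to\widehat{H}^{\bt}_{c}(\Tr\F;\P)$ is an isomorphism, whence $\widehat{H}^{0}_{c}(\Tr\F;\P)\cong H^{0}_{c}(\Tr\F;\P)\cong\RR$.

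With all hypotheses of Theorem~\ref{thm:d3} now in place for the connected manifold $M$ and the basic flat line bundle $\P$, I conclude that $(M,\F)$ is strongly tense with $\P$, and in particular strongly tense; reassembling the finitely many components then yields the statement for general $M$. I expect no serious obstacle here, since the genuine mathematical content has already been absorbed into Theorems~\ref{thm:masa} and~\ref{thm:d3} and Corollary~\ref{cor:inv}. The only two points demanding any care are the reduction to connected $M$, which is clean once one observes that compactness of $M/\F$ forces finitely many components, and the verification that $\P$ is genuinely a basic flat line bundle, which is built into its construction in~\cite{Sergiescu1985}.
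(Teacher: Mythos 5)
Your proposal is correct and follows essentially the same route as the paper: apply Theorem~\ref{thm:d3} with $\E=\P$, using that Sergiescu's orientation sheaf is always a basic flat line bundle. The only cosmetic differences are that you obtain $\widehat{H}^{0}_{c}(\Tr \F;\P)\cong\RR$ from Corollary~\ref{cor:inv}-(iii) combined with Theorem~\ref{thm:masa}, whereas the paper cites Theorems~\ref{thm:dualR} and~\ref{thm:fin} directly (an equivalent derivation, since Corollary~\ref{cor:inv} is itself proved from those results), and that you make explicit the reduction to connected $M$, which the paper leaves implicit.
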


Since any transversely complete Riemannian foliation is complete \cite[Proposition 3.1.2]{Haefliger1985}, Corollary \ref{cor:st} is a generalization of \cite[Theorem 1.2]{NozawaRoyoPrieto2014}, which asserts that any transversely complete Riemannian foliation of dimension one is strongly tense.

If $M$ is connected and $\F$ is transversely orientable, then $\P$ is a trivial flat real line bundle if and only if $H^{n}_{c}(M,\F) \cong \RR$ by a result of Sergiescu \cite{Sergiescu1985}. We show the following generalization of a result of \'{A}lvarez L\'{o}pez \cite{AlvarezLopez1996} and one direction of Masa's characterization~\cite[Minimality theorem]{Masa1992} of taut Riemannian foliations. 

\begin{cor}\label{cor:masa}
Let $h$ be the holonomy homomorphism of the Sergiescu's orientation sheaf $\P$ of a complete Riemannian foliation $\F$. If $h(\pi_{1}M) \subset \{\pm 1\}$, then $\F$ is taut. In particular, if $\P$ is trivial as a flat real line bundle, then $\F$ is taut.
\end{cor}

There are simple examples of taut Riemannian foliations on open manifolds whose Sergiescu's orientation sheaf is nontrivial (see Example~\ref{ex:con}). So the other direction of Masa's characterization does not hold in general for Riemannian foliations on open manifolds.

\subsection{The case of Lie foliations}\label{sec:exam}

We will illustrate our main results with Lie foliations. Let $G$ be a simply connected Lie group. A foliation on a closed manifold is a $G$-Lie foliation if and only if its holonomy pseudogroup is equivalent to the pseudogroup $\H_{\Gamma}$ on $G$ generated by the left action of a subgroup $\Gamma$ of $G$. A $G$-Lie foliation is always Riemannian. Here we consider a complete $G$-Lie foliation $\F$ on a manifold $M$. Its holonomy pseudogroup is equivalent to $\H_{\Gamma}$ for a subgroup $\Gamma$ of $G$, and by \cite{Fedida1972}, we obtain the following data: 
\begin{itemize}
\item a homomorphism $\hol : \pi_{1}M \to G$ such that $\hol(\pi_{1}M)=\Gamma$ and 
\item a submersion $\dev : \widetilde{M} \to G$
\end{itemize}
with an equivariant condition $\dev(\gamma \cdot x)=  \hol (\gamma) \dev (x)$ for $\forall x \in \widetilde{M}$, $\forall \gamma \in \pi_{1}M$, where $\widetilde{M}$ is the universal cover of $M$. It is easy to see that we can recover the foliation $\F$ by taking quotient of the foliation on $\widetilde{M}$ whose leaves are the fibers of $\dev$. As we will see in Sections \ref{sec:4} and \ref{sec:6} of this paper, the computation of the Haefliger cohomology of complete Riemannian foliations can be reduced to the case of minimal $G$-Lie foliations. The group $\Gamma = \hol(\pi_{1}M)$ is called the {\em holonomy group} of $(M,\F)$. It is easy to see that $\F$ is minimal if and only if its holonomy group is dense in $G$. 

Given a finitely generated dense subgroup $\Gamma$ of a Lie group $G$, we can construct a minimal complete $G$-Lie foliation $(M,\F)$ with holonomy group $\Gamma$, for example, by suspension. Let $\rho : G \to \GL(V)$ be a $G$-representation on a vector space $V$. Then we can construct a flat vector bundle $\E_{V}$ on $M$ as follows: Consider a flat vector bundle $\E_{G}$ over $G$ defined by the first projection $G \times V \to G$ with parallel sections of the form $G \to G \times V; g \mapsto (g, \rho(g^{-1})v)$ for some $v \in V$. Then, we obtain a flat vector bundle over $M$ by taking quotient of the flat vector bundle $\dev^{*}\E_{G}$ over the universal cover $\widetilde{M}$ by the $\pi_{1}M$-action given by the holonomy homomorphism. In this case, the holonomy pseudogroup $\Hol_{T}(\F,\E)$ is equivalent to the pseudogroup $\widetilde{\H}_{\Gamma}$ on $G \times V$ generated by the $\Gamma$-action given by $\gamma (g,v) = (\gamma g, v)$ for $\forall g \in G$, $\forall v \in V$ and $\forall \gamma \in \Gamma$. Since $\Gamma$ is dense in $G$, a $k$-form $\alpha$ on $G$ with values in $\E_{G}$ is $\widetilde{\H}_{\Gamma}$-invariant if and only if $\alpha$ is $G$-invariant. Here, by construction, $\alpha$ is $G$-invariant if and only if $\alpha$ is parallel. This implies that there is a one-to-one correspondence between $\Gamma$-invariant $k$-forms valued in $\E_{V}$ and the fiber $\wedge^{k} \mg^{*} \otimes V$ over the unit element of $G$. Thus we have  
\begin{align}
\Omega^{\bt}(M,\F;\E_{V}) & \cong \wedge^{\bt} \mg^{*} \otimes V\;, \label{eq:co0} \\
H^{\bt}(M,\F;\E_{V}) & \cong H^{\bt}(\mg;V)\;, \notag 
\end{align}
where $\mg = \Lie(G)$. Note that, since the leaves of $\F$ is dense in $M$, we have $H^{\bt}_{c}(\Tr \F;\E_{V}) \cong H^{\bt}_{lc}(\Tr \F;\E_{V})$. Since $H^{\bt}(\mg; V^{*})^{*} \cong H_{\bt}(\mg; V)$ and $H^{\bt}(\mg; V^{*})^{*} \cong H^{\dim G - \bt}(\mg; V \otimes \det \mg)$ by duality of Lie algebra cohomology  (see \cite[Theorem~6.10]{Knapp1988}), as a special case of consequence of Theorems \ref{thm:masa} and \ref{thm:dfol}, we obtain the following. 
\begin{cor}\label{cor:Lie}
Let $(M,\F)$ be a transversely oriented complete $G$-Lie foliation. Let $\E_{V}$ be the flat vector bundle over $M$ constructed above with a $G$-representation $V$. Then we have 
\[
H^{\bt}_{c}(\Tr \F;\E_{V}) \cong H^{\bt}_{lc}(\Tr \F;\E_{V}) \cong H_{\dim G - \bt}(\mg; V) \cong H^{\bt}(\mg; V \otimes \det \mg)^{*}\,.
\]
\end{cor}
Corollary~\ref{cor:inv} in this case directly follows from Corollary~\ref{cor:Lie} and the Poincar\'{e} duality of Lie algebra cohomology (see~\cite[Theorem~6.10]{Knapp1988}). In this case, $\P$ is isomorphic to the flat vector bundle $\E_{\det \mg^{-}}$, where $\mg^{-}$ is the Lie algebra of right invariant vector fields on $G$. As a special case of Corollaries~\ref{cor:st} and~\ref{cor:masa}, we get the following.
\begin{cor}\label{cor:stu}
Any minimal complete $G$-Lie foliation is strongly tense. Any minimal complete $G$-Lie foliation is taut if $G$ is unimodular.
\end{cor}
In the case where $M$ is compact, Corollaries~\ref{cor:Lie} and~\ref{cor:stu} are special cases of theorems of Masa and Dom\'{i}nguez. Note that if $M$ is compact, then the holonomy group $\Gamma$ is necessary to be compactly generated in the sense of Haefliger \cite{Haefliger2002} (see also \cite{Meigniez1997}). In these corollaries, we can compute Haefliger cohomology for minimal complete Lie foliations in general.

\begin{ack}
The author is very grateful to Gilbert Hector for stimulative discussions. Lemma~\ref{lem:lied} in the case of trivial coefficient is due to him. The author is very grateful to Jes\'{u}s Antonio \'{A}lvarez L\'{o}pez for helpful discussions and many valuable comments on the first manuscript. The author is very grateful to Xos\'{e} Maria Masa V\'{a}zquez and Jos\'{e} Ignacio Royo Prieto for helpful discussions. The author is very grateful to anonymous referee to point out errors in the manuscript. This paper was written during the stay of the author at Institut des Hautes \'{E}tudes Scientifiques, Institut Mittag-Leffler and Centre de Recerca Matem\`{a}tica and University of Santiago de Compostela; he is very grateful for their hospitality. 
\end{ack}

\section{Rummler-Sullivan type characterization of strong tenseness}

Let $(M,g)$ be a Riemannian manifold with a $p$-dimensional foliation $\F$. In the case where $\F$ is oriented, the {\em characteristic form} of $(M,\F,g)$ is defined to be the unique $p$-form $\chi$ on $M$ such that $\ker \chi = (T\F)^{\perp}$ and $\chi \big|_{\bigwedge^{p}T\F}$ is the oriented leafwise volume form of norm one at each point on $M$. In general, on any simply connected open subset $U$ of $M$, we have two characteristic forms $\chi$ of $(U,\F|_{U},g|_{U})$ depending the choice of an orientation of $\F|_{U}$. We call such $\chi$ a {\em local characteristic form} of $(M,\F,g)$. By Rummler's formula~\cite{Rummler1979}, the mean curvature form $\kappa$ of $(M,\F,g)$ is determined by a local characteristic form $\chi$ as follows:
\begin{equation}\label{eq:R}
\kappa(Y)=-d\chi(Y, E_{1}, \ldots, E_{p})\;,
\end{equation}
for any local vector field $Y$, where $\{E_1,\dots,E_p\}$ is a local orthonormal frame of $T\F$ such that $\chi (E_{1}, \ldots, E_{p}) = 1$.

Let $(\E,\nabla)$ be a flat vector bundle over $M$. We recall the following terminologies.
\begin{defn}
We call $\omega\in \Omega^{\bt}(M;\E)$ an $\F$-{\em trivial} form if $\iota(X_{1}) \circ \iota(X_{2})  \circ \cdots \circ \iota(X_{p})\omega = 0$ for any vector fields $X_{1}$, $X_{2}$, $\ldots$, $X_{p}$ tangent to $\F$. We call $\omega\in \Omega^{\bt}(M;\E)$ {\em relatively} $\F$-{\em closed} if $d_{\nabla}\omega$ is $\F$-trivial, where $d_{\nabla}$ is the covariant derivative.
\end{defn}
Rummler's formula \eqref{eq:R} is expressed in terms of $\F$-trivial forms as follows:
\begin{lem}\label{lem:R}
For a foliated Riemannian manifold $(M,\F,g)$ with characteristic form $\chi$ and mean curvature form $\kappa$, there exists an $\F$-trivial form $\beta$ such that
\begin{equation}\label{eq:R2}
d\chi = -\kappa \wedge \chi + \beta\;.
\end{equation}
\end{lem}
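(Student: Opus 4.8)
The plan is to decode Rummler's formula~\eqref{eq:R} into the global statement~\eqref{eq:R2} by a purely pointwise argument, contracting both sides against a local leafwise frame. The key observation is that a $p$-form $\omega$ on $M$ is $\F$-trivial precisely when its full contraction $\iota(E_1)\cdots\iota(E_p)\omega$ against any (hence every) local oriented orthonormal leafwise frame $\{E_1,\dots,E_p\}$ vanishes; in the notation of the lemma, this is the condition that $\omega$ contracts to zero along $p$ leafwise directions. I would therefore define
\begin{equation*}
\beta := d\chi + \kappa \wedge \chi
\end{equation*}
and verify directly that $\beta$ is $\F$-trivial. Since $\beta$ is a $(p+1)$-form, $\F$-triviality amounts to checking that $\iota(E_1)\cdots\iota(E_p)\beta = 0$ for a local leafwise orthonormal frame, i.e.\ that contracting $\beta$ against any $p$ leafwise vectors yields zero.

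First I would fix a point $x\in M$ and a local oriented orthonormal frame $\{E_1,\dots,E_p\}$ of $T\F$ near $x$. The characteristic form $\chi$ is by definition the leafwise volume form, so $\iota(E_1)\cdots\iota(E_p)\chi = 1$ at each point, and contracting $\chi$ against any $p$ leafwise vectors returns a scalar; in particular $\chi$ itself is leafwise of top degree, so $\iota(E_i)\chi$ lies in the appropriate lower leafwise degree. The computation then reduces to evaluating $\iota(E_1)\cdots\iota(E_p)(d\chi + \kappa\wedge\chi)$ on an arbitrary transverse vector $Y$. Using the Leibniz rule for interior products on the wedge $\kappa\wedge\chi$, together with $\iota(E_1)\cdots\iota(E_p)\chi = 1$ and the fact that $\kappa$ is a one-form (so most terms in the expansion of $\iota(E_1)\cdots\iota(E_p)(\kappa\wedge\chi)$ vanish for degree reasons, leaving only $\pm\kappa\wedge(\iota\text{-contractions of }\chi)$), I expect the $\kappa\wedge\chi$ contribution to cancel exactly the contribution of $d\chi$ computed via~\eqref{eq:R}. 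Concretely, Rummler's formula says $\kappa(Y) = -d\chi(Y,E_1,\dots,E_p)$, which is up to sign the full leafwise contraction of $d\chi$ applied to $Y$; rearranging gives precisely the vanishing of $\beta$ contracted along the frame.

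The main bookkeeping obstacle, and the only genuinely delicate point, will be tracking the signs and combinatorial factors in the contraction $\iota(E_1)\cdots\iota(E_p)(\kappa\wedge\chi)$, since one must separate the single term where all $p$ contractions hit $\chi$ (leaving $\kappa$ untouched, which pairs with the transverse $Y$) from the terms where one contraction hits $\kappa$; the latter must be shown to vanish because $\iota(E_i)\kappa$ contributes a scalar while the remaining contraction of the $p$-form $\chi$ by only $p-1$ leafwise vectors still carries a leafwise index that is then forced against another $E_j$, producing a repeated contraction and hence zero by antisymmetry. Once this pointwise identity is checked against an arbitrary $Y$ transverse to $\F$ (the leafwise directions being automatic since both $d\chi+\kappa\wedge\chi$ contracted $p+1$ times leafwise vanishes trivially), frame-independence of $\F$-triviality guarantees the conclusion globally. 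I would close by noting that $\beta$ so defined is manifestly smooth and that the entire argument is local and pointwise, so no compactness or completeness hypotheses on $(M,\F,g)$ are needed.
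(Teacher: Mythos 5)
Your proposal is correct and matches the paper's treatment: the paper gives no separate proof of Lemma~\ref{lem:R}, presenting it as an immediate pointwise reformulation of Rummler's formula~\eqref{eq:R}, which is exactly what your contraction argument with $\beta := d\chi + \kappa\wedge\chi$ makes explicit. One correction to your bookkeeping, though: the cross terms $\pm\,\kappa(E_i)\,\chi(Y,E_1,\dots,\widehat{E_i},\dots,E_p)$ do not vanish because of a repeated contraction inside $\chi$ (that factor equals $\pm$ the $E_i$-component of $Y$, which is generally nonzero for a merely transverse $Y$); they vanish either because $\kappa(E_i)=-d\chi(E_i,E_1,\dots,E_p)=0$ by antisymmetry of~\eqref{eq:R} applied to leafwise arguments, or, more cleanly, because it suffices to test the $1$-form $\iota(E_1)\cdots\iota(E_p)\beta$ against $Y\in(T\F)^{\perp}=\ker\chi$ (leafwise $Y$ being killed by linear dependence of $p+1$ vectors in $T\F$), in which case $\iota(Y)\chi=0$ kills every cross term.
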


For a flat real line bundle $\E$ over $M$ with holonomy homomorphism $h : \pi_{1}M \to \Aut (\RR) \cong \RR^{\times}$, let $(\E^{+},\nabla^{+})$ be the topologically trivial line bundle over $M$ whose holonomy homomorphism is the absolute value $|h|$ of $h$. 

\begin{prop}\label{prop:tense}
Let $(\E,\nabla)$ be a flat real line bundle over $M$ with holonomy homomorphism $h$. For a foliated Riemannian manifold $(M,\F,g)$, the following are equivalent:
\begin{enumerate}
\item The mean curvature form $\kappa$ of $(M,\F,g)$ is a closed basic $1$-form such that $\log |h(\gamma)| = -\int_{\gamma}\kappa$ for $\forall \gamma \in \pi_{1}M$.
\item There exists a nowhere vanishing section $\sigma$ of $\E^{+}$ such that $\sigma|_{L}$ is parallel for every leaf $L$ of $\F$ and $\chi \otimes \sigma$ is relatively $\F$-closed for any local characteristic form $\chi$ of $(M,\F,g)$.
\end{enumerate}
\end{prop}

\begin{proof}
Let $\sigma$ be a nowhere vanishing section of $\E^{+}$ such that $\sigma|_{L}$ is parallel for every leaf $L$ of $\F$. On a connected simply-connected open subset $U$ of $M$, we take a nowhere vanishing function $f$ on $U$ so that $\frac{1}{f}\sigma$ is a parallel section of $(\E^{+}|_{U},\nabla^{+}|_{U})$. Note that $f$ is basic. We get
\begin{equation}\label{eq:g1}
d_{\nabla}\sigma = df \wedge \frac{1}{f} \sigma = d\log |f| \wedge \sigma\;.
\end{equation}
Let $p=\dim \F$. By Rummler's formula (Lemma~\ref{lem:R}), for any of two local characteristic forms $\chi$ of $(M,\F,g)$ defined on $U$, there exists an $\F$-trivial form $\beta$ which satisfies $d\chi = -\kappa \wedge \chi + \beta$. Then, by~\eqref{eq:g1}, we get
\begin{align}
d_{\nabla}(\chi \otimes \sigma) & = d\chi \otimes \sigma + (-1)^{p} \chi \wedge d_{\nabla}\sigma \notag \\
& = \big(d\chi + (-1)^{p} \chi \wedge d\log |f| \big) \otimes \sigma \label{eq:compt} \\
& = (-\kappa + d\log |f|) \wedge \chi \otimes \sigma + \beta \otimes \sigma\;. \notag
\end{align}

We show that (ii) implies (i). Since $\chi \otimes \sigma$ is relatively $\F$-closed, then, by \eqref{eq:compt}, we get $-\kappa + d\log |f|=0$, which implies that $\kappa$ is a closed basic $1$-form such that $\log |h(\gamma)| = -\int_{\gamma}\kappa$ for $\forall \gamma \in \pi_{1}M$.

We show that (i) implies (ii). Assume that the mean curvature form $\kappa$ is a closed basic $1$-form such that $\log |h(\gamma)| = -\int_{\gamma}\kappa$ for $\forall \gamma \in \pi_{1}M$. Then $\E^{+}$ is obtained by the quotient of $\widetilde{M} \times \RR$ by the diagonal $\pi_{1}M$-action $\rho$ defined by $\rho(\gamma) (x, v) = (\gamma \cdot x, |h(\gamma)| v )$ for $\gamma \in \pi_{1}M$, $x \in \widetilde{M}$ and $v \in \RR$.  Take a primitive $\zeta$ of $u^{*}\kappa$, where $u : \widetilde{M} \to M$ is the universal cover. Then the graph of $e^{-\zeta}$ in $\widetilde{M} \times \RR$ is $\rho$-invariant, and its quotient in $\E$ is the image of a nowhere vanishing global section $\sigma$ of $\E$. Since $\kappa$ is basic, $\sigma|_{L}$ is parallel for every leaf $L$ of $\F$. Since $e^{\zeta}\sigma$ is parallel on simply connected open subset in $M$, we can take $f(x) = e^{\zeta(x)}$ in \eqref{eq:compt}. It follows that $d_{\nabla}(\chi \otimes \sigma)$ is $\F$-trivial. Then $\chi \otimes \sigma$ is relatively $\F$-closed, which concludes the proof.
\end{proof}

Let us define the following.
\begin{defn}\label{defn:stE}
For a foliated manifold $(M,\F)$ and  a flat real line bundle $(\E,\nabla)$ over $M$, we say that $(M,\F)$ is {\em strongly tense with} $(\E,\nabla)$ if there exists a Riemannian metric $g$ such that the mean curvature form $\kappa$ of $(M,\F,g)$ is a closed basic $1$-form such that $\log |h(\gamma)| = -\int_{\gamma}\kappa$ for $\forall \gamma \in \pi_{1}M$.
\end{defn}

Since we take the absolute value of the holonomy map of $(\E,\nabla)$ in this definition, $(M,\F)$ is strongly tense with respect to $(\E,\nabla)$ if and only if $(M,\F)$ is strongly tense with respect to $(\E^{+},\nabla^{+})$. Note that, as we see in the proof of the last proposition, the mean curvature depends only on the absolute value of the holonomy of the flat real line bundles. Let us show the following Rummler-Sullivan type characterization by using the last proposition and Sullivan's purification.
\begin{prop}\label{prop:RS}
For a closed manifold $M$ with a $p$-dimensional orientable foliation $\F$ and a topologically trivial flat real line bundle $(\E,\nabla)$ with basic holonomy over $(M,\F)$, the following are equivalent:
\begin{enumerate}
\item $(M,\F)$ is strongly tense with respect to $(\E,\nabla)$.
\item There exists a relatively $\F$-closed form $\eta \in \Omega^{p}(M;\E)$ whose restriction to any leaf is nowhere vanishing.
\end{enumerate}
\end{prop}

\begin{proof}
(ii) follows from (i) by the last proposition. Let us show that (i) follows from (ii) by using Sullivan's purification. Assume that there exists a relatively $\F$-closed form $\eta \in \Omega^{p}(M;\E)$ whose restriction to any leaf is nowhere vanishing. Take a section $\sigma$ of $\E$ whose restriction to each leaf of $\F$ is parallel as in the proof of the last proposition. Then we have $\eta = \chi \otimes \sigma$ for some $\chi \in \Omega^{p}(M)$ whose restriction to each leaf is a volume form. Here $\chi$ may not be a characteristic form, and we modify it by Sullivan's purification. Let $\varpi : C^{\infty}(TM) \to C^{\infty}(T\F)$ be the $C^{\infty}$-linear projection map determined by 
\[
\iota \big(\varpi(X)\big) \big(\chi \big|_{\bigwedge^{p}T\F} \big) = \big( \iota(X)\chi \big) \big|_{\bigwedge^{p-1}T\F}\;,
\]
which is well-defined by the fact that $\chi \big|_{\bigwedge^{p}T\F}$ is a leafwise volume form of $\F$. Let $\tilde{\eta} = \varpi^{*}\eta$. It is easy to see that $\ker \tilde{\eta}$ is a plane field complement to $T\F$ and hence $\tilde{\eta}$ is a characteristic form on $(M,\F)$ with some Riemannian metric. We show that $\F$ satisfies the condition of (ii) of the last proposition. For that, it suffices to show that $\tilde{\eta}$ is relatively $\F$-closed. By definition of the purification, we have $\tilde{\eta} (X_{1}, \cdots, X_{p-1}, Y) = \eta (X_{1}, \cdots, X_{p-1}, Y)$ for $X_{1}$, $\ldots$, $X_{p-1} \in C^{\infty}(T\F)$ and $Y \in C^{\infty}(TM)$, from which relatively $\F$-closedness of $\tilde{\eta}$ follows. Since $\E$ admits a nowhere vanishing leafwise parallel section by the argument in the last paragraph of the proof of Proposition \ref{prop:tense}, $\F$ satisfies the condition (ii) of the last proposition, which implies that $\F$ is strongly tense with respect to $(\E,\nabla)$.
\end{proof}

\section{Haefliger type characterization of strong tenseness}\label{sec:2}

\subsection{Haefliger cohomology of linear pseudogroups on flat vector bundles}\label{sec:Hcoh}

Recall that a {\em pseudogroup} on a manifold $N$ is a collection of diffeomorphisms between two open sets of $N$ which contains the identities and is closed under composition, inversion, restriction and union. In this article, we consider flat vector bundles with pseudogroup actions in the following sense, which is a variant of the one used in~\cite[Section~3.2]{Haefliger1985}.

\begin{defn}\label{defn:linearH}
Let $\pi : (\E,\nabla) \to T$ be a flat vector bundle over a manifold $T$. We say that a pseudogroup $\H$ on $\E$ is {\em linear}, or $\H$ {\em linearly acts on} $(\E,\nabla)$ if $\H$ is generated by local diffeomorphisms $h$ on $\E$ of the following form: there exist open sets $U$, $V$ of $T$ and a diffeomorphism $h_{T} : U \to V$ such that  
\begin{enumerate} 
\item $\Dom h = \pi^{-1}(U)$, $\image h = \pi^{-1}(V)$ and
\item $h$ is an isomorphism of flat vector bundles $(\E,\nabla)|_{U} \to (\E,\nabla)|_{V}$ which covers $h_{T} : U \to V$.
\end{enumerate}
The pseudogroup $\H_{T}$ on $T$ generated by the above $h_{T}$ is called the {\em base pseudogroup} induced by $\H$. For $h \in H$ and a differential form $\alpha$ with values in $\E$ such that $\pi^{-1}(\supp \alpha) \subset \image h$, we will denote the extention by zero of the pull back $h^{-1} \circ \alpha \circ h_{T}$ of $\alpha$ by $h^{*}\alpha$.
\end{defn}

In the case where there is a one to one correspondence between $\H$ and $\H_{T}$, we may identify $\H$ with $\H_{T}$ and we say that $\H_{T}$ linearly acts on $\E$.

\begin{rem}
In the situation of Definition \ref{defn:linearH}, the set consisting of all germs of the elements of $\H$ is a groupoid, whose objects are the points of $\E$. This groupoid acts on $\E$, and the groupoid obtained from the germs of the base pseudogroup acts on $T$.
\end{rem}

A typical example is obtained from a foliated manifold $(M,\F)$ with a flat vector bundle $(\E,\nabla)$ and a total transversal $T$. Consider the pseudogroup $\Hol_{T}(\F,\E)$ on $\E|_{T}$ introduced in Section \ref{sec:I2}, which is generated by holonomy maps on $\E$ along leaf paths of $\F$ which connects two points on $T$. Then $\Hol_{T}(\F,\E)$ is a linear pseudogroup on $\E|_{T}$ whose base pseudogroup is the holonomy pseudogroup of $\F$ on $T$.

Consider a flat vector bundle $\pi : (\E,\nabla) \to T$ with a linear pseudogroup $\H$. Let $\Omega_{c}^{\bt}(T;\E)$ be the space of compactly supported differential forms on $T$ with values in $\E$, and $\L^{\bt}_{c, \H}$ the subspace of $\Omega_{c}^{\bt}(T;\E)$ consisting of finite sums of elements of the form $h^{*}\alpha - \alpha$ for $h\in \H$ and $\alpha\in \Omega_{c}^{\bt}(T;\E)$ such that $\pi^{-1} (\supp \alpha) \subset \image h$. Let
\[
\Omega_{c}^{\bt}(\E/\H) := \Omega_{c}^{\bt}(T;\E)/\L^{\bt}_{c, \H}\;.
\]
Since $\nabla$ is $\H$-invariant, the covariant derivative $d_{\nabla} : \Omega_{c}^{\bt}(T;\E) \to \Omega_{c}^{\bt +1}(T;\E)$ commutes with $\H$. Thus we get $d_{\nabla} : \Omega_{c}^{\bt}(\E/\H) \to \Omega_{c}^{\bt+1}(\E/\H)$.

\begin{defn}
The cohomology of the complex $(\Omega_{c}^{\bt}(\E/\H), d_{\nabla})$ is called the {\em Haefliger cohomology} of $\H$ {\em with values in} $(\E,\nabla)$ and denoted by $H^{\bt}_{c}(\E/\H)$.
\end{defn}

We topologize $\Omega_{c}^{\bt}(T;\E)$ with the LF-topology (see~\cite[Section~9]{deRham1984}). Since $\L^{\bt}_{c, \H}$ is not closed in $\Omega_{c}^{\bt}(T;\E)$ in general, the complex $\Omega_{c}^{\bt}(\E/\H)$ may not be Hausdorff with the quotient topology. The quotient of $\Omega_{c}^{\bt}(\E/\H)$ by the closure of $0$ is denoted by $\widehat{\Omega}^{\bt}_{c}(\E/\H)$ and its cohomology is denoted by $\widehat{H}^{\bt}_{c}(\E/\H)$. Following~\cite{AlvarezLopezMasa2008}, we call $\widehat{H}^{\bt}_{c}(\E/\H)$ the {\em reduced Haefliger cohomology of} $\H$ {\em with values in} $\E$. We have a canonical map
\[
\tau : H^{\bt}_{c}(\E/\H) \longrightarrow \widehat{H}^{\bt}_{c}(\E/\H)\;.
\]

Haefliger~\cite{Haefliger2002} defined morphisms and equivalences between pseudogroups. Let $\H_{i}$ be a pseudogroup on a manifold $T_{i}$ $(i=1,2)$. As noted in a preprint \cite{Raimbaud2009}, we can see that a set $\Phi = \{\Phi^{j}\}$ of diffeomorphisms from an open set of $T_{1}$ to an open set of $T_{2}$ is an equivalence from $\H_{1}$ to $\H_{2}$ if and only if $\Phi$ is a maximal set such that 
\begin{itemize}
\item $T_{1} \subset \cup_{j} \Dom \Phi^{j}$, $T_{2} \subset \cup_{j} \image \Phi^{j}$,
\item  $\Phi \H_{1} \Phi^{-1} \subset \H_{2}$ and $\Phi^{-1} \H_{2} \Phi \subset \H_{1}$.
\end{itemize}
Haefliger cohomology with values in flat vector bundles is invariant under the equivalence in the following sense: Consider a linear pseudogroup $\H_{i}$ on a flat vector bundle $(\E_{i},\nabla_{i}) \to T_{i}$. If there exist an open covering $\{V^{j}\}$ of $T_{1}$, an open covering $\{W^{j}\}$ of $T_{2}$ and bundle maps $\Phi^{j} : \E_{1}|_{V^{j}} \to \E_{2}|_{W^{j}}$ such that  
\begin{enumerate}
\item $\Phi^{j}$ is an isomorphism of flat vector bundles $(\E_{1},\nabla_{1})|_{V^{j}} \to (\E_{2},\nabla_{2})|_{W^{j}}$  and
\item $\{\Phi^{j}\}$ generates an equivalence between $\H_{1}$ and $\H_{2}$ in the sense of Haefliger,
\end{enumerate}
then we have $\Omega^{\bt}_{c}(\E_{1}/\H_{1}) \cong \Omega^{\bt}_{c}(\E_{2}/\H_{2})$ and,  in particular, $H^{\bt}_{c}(\E_{1}/\H_{1}) \cong H^{\bt}_{c}(\E_{2}/\H_{2})$. In~\cite[Theorem~2.5]{Raimbaud2009}, it is shown that this equivalence of pseudogroups is equivalent to the Morita equivalence of the groupoids obtained by taking germs the pseudogroups. In this sense, Haefliger cohomology can be regarded as invariants of groupoids.

\subsection{Characterization of strong tenseness in terms of Haefliger cohomology}\label{sec:charst}

Let $M$ be a manifold with a foliation $\F$ and a flat vector bundle $(\E,\nabla)$. A {\em transversal} of $(M,\F)$ is an immersion $T \to M$ which is transverse to the leaves of $\F$ by definition, but throughout this article we regard $T$ as a subset of $M$. Recall that a transversal $T$ of $(M,\F)$ is called {\em total} if it intersects every leaf of $\F$. Take a total transversal $T$ and consider the holonomy pseudogroup $\Hol_{T}(\F,\E)$ on $\E|_{T}$ generated by the lift of holonomy maps of $\F$ to $\E$ (see Section \ref{sec:I2}), which acts on $\E|_{T}$. Since the equivalence class of $\Hol_{T}(\F,\E)$ is independent of the choice of $T$, so is the isomorphism class of $\Omega^{\bt}_{c}\big(\E|_{T}/\Hol_{T}(\F,\E)\big)$. Thus the Haefliger cohomology of $(M,\F)$ with values in $(\E,\nabla)$ is well-defined as the cohomology of this complex. 

\begin{defn}
For a foliated manifold $(M,\F)$ with a total transversal $T$ and a flat vector bundle $(\E,\nabla)$, let 
\begin{align*}
\Omega^{\bt}_{c}(\Tr \F;\E) & := \Omega^{\bt}_{c}\big(\E|_{T}/\Hol_{T}(\F,\E)\big)\;, \\
H^{\bt}_{c}(\Tr \F;\E) & := H^{\bt}\big(\Omega^{\bt}_{c}\big(\E|_{T}/\Hol_{T}(\F,\E)\big), d_{\nabla}\big)\;
\end{align*}
and $H^{\bt}_{c}(\Tr \F;\E)$ is called the {\em Haefliger cohomology} of $(M,\F)$ {\em with values in} $(\E,\nabla)$.
\end{defn}

The reduced complex $\widehat{\Omega}_{c}^{\bt}\big(\E|_{T}/\Hol_{T}(\F,\E)\big)$ is the quotient of $\Omega_{c}^{\bt}\big(\E|_{T}/\Hol_{T}(\F,\E)\big)$ by the closure of zero. The complex $\widehat{\Omega}^{\bt}_{c}(\Tr \F;\E):= \widehat{\Omega}_{c}^{\bt}\big(\E|_{T}/\Hol_{T}(\F,\E)\big)$ and the reduced Haefliger cohomology $\widehat{H}^{\bt}_{c}(\Tr \F;\E):= H^{\bt}(\widehat{\Omega}^{\bt}_{c}(\Tr \F;\E))$ are well-defined for $(M,\F)$ and $\E$ being independent of the choice of $T$.

For open foliated manifolds, we cannot use Haefliger cohomology to characterize the tautness. For example, any characteristic form of an open foliated manifold is not compactly supported. Here we introduce a variant of Haefliger cohomology which is useful to consider tautness and strong tenseness of complete Riemannian foliations on possibly open manifolds. For a general linear pseudogroup $\H$ on a flat vector bundle $\E$ over a manifold $T$, let 
\begin{multline*}
\Omega^{\bt}_{lc, \H}(T;\E) = \{ \, \alpha \in \Omega^{\bt}(T;\E) \mid \forall x \in T, \\
\exists K: \text{ a compact neighborhood of } x \text{ in } T \text{ s.t. } \alpha|_{\H K} \in \Omega_{c}^{\bt}(\H K;\E|_{\H K}) \, \} \end{multline*}
and $\Lambda^{\bt}_{lc, \H}$ be a subcomplex of $\Omega^{\bt}_{lc, \H}$ which consists of $\alpha \in \Omega^{\bt}_{lc, \H}(T;\E)$ of the form $\alpha = \sum_{i \in I} h_{i}^{*}\alpha_{i} - \alpha_{i}$ for $\alpha_{i} \in \Omega^{\bt}_{lc, \H}(T;\E)$ and $h_{i} \in \H$ such that 
\begin{itemize}
\item $\pi^{-1}(\supp \alpha_{i}) \subset \image h_{i}$, where $\pi : \E \to T$ is the projection, and
\item $\{ \supp \alpha_{i}, (h_{i}^{-1})_{T}(\supp \alpha_{i}) \}_{i \in I}$ is locally finite, where $(h_{i}^{-1})_{T}$ is the map induced on $T$ by $h_{i}^{-1}$.
\end{itemize}
The topology of $\Omega^{\bt}_{lc, \H}(T;\E)$ is defined as follows: A sequence $\{\alpha_{j}\}$ in $\Omega^{\bt}_{lc, \H}(T;\E)$ converges to $\alpha \in \Omega^{\bt}_{lc, \H}(T;\E)$ if, for any $x \in T$, there exists a compact neighborhood $K$ of $x$ in $T$ such that  $\alpha|_{\H K}$, $\alpha_{k}|_{\H K} \in \Omega^{\bt}_{c}(\H K; \E|_{\H K})$ for any $k \in \ZZ_{>0}$ and $\lim_{k \to \infty} \alpha_{k}|_{\H K} = \alpha|_{\H K}$ in $\Omega^{\bt}_{c}(\H K; \E|_{\H K})$. Now we define the leafwise compactly supported Haefliger cohomology as follows:
\begin{align*}
\Omega^{\bt}_{lc}(\E/\H) & := \Omega^{\bt}_{lc, \H}(T;\E) / \Lambda^{\bt}_{lc, \H}\;, \\
H^{\bt}_{lc}(\E/\H) & := H^{\bt}\big(\Omega^{\bt}_{lc}(\E/\H), d_{\nabla}\big)\;.
\end{align*}
The leafwise compactly supported Haefliger cohomology $H_{lc}^{\bt}(\Tr \F;\E)$ of a foliated manifold $(M,\F)$ with values in $\E$ is defined in terms of the holonomy pseudogroup $\Hol_{T}(\F,\E)$ on a total transversal $T$ as the cohomology of the complex $\Omega^{\bt}_{lc}(\Tr \F;\E) := \Omega^{\bt}_{lc}(\E|_{T}/\Hol_{T}(\F,\E))$. The reduced versions $\widehat{H}^{\bt}_{lc}(\E/\H)$ and $\widehat{H}^{\bt}_{lc}(\Tr \F;\E)$ are defined as the cohomology of the quotient of the complexes by the closure of zero.

It is easy to see that, if the space of orbit closures $T/\overline{\H_{T}}$ of the base pseudogroup $\H_{T}$ induced by $\H$ is compact, then $\Omega^{\bt}_{lc}(\E/\H) = \Omega^{\bt}_{c}(\E/\H)$ and hence $H^{\bt}_{lc}(\E/\H) \cong H^{\bt}_{c}(\E/\H)$. In particular, if the space of leaf closures of a foliated manifold $(M,\F)$ is compact, we have $\Omega^{\bt}_{lc}(\Tr \F;\E) \cong \Omega^{\bt}_{c}(\Tr \F; \E)$ and $H^{\bt}_{lc}(\Tr \F;\E) \cong H^{\bt}_{c}(\Tr \F; \E)$ for any flat vector bundle $\E$ over $M$.

We state a Haefliger type characterization of strong tenseness. Recall that we call a section $\xi$ of a topologically trivial real line bundle $\E$ {\em nonnegative} if $\xi$ is identified with a nonnegative function under a topological trivialization of $\E$. For such section $\zeta$, let $\supp_{+}^{T}\zeta := \{ x \in T \mid \zeta(x) \neq 0 \}$.

\begin{thm}\label{thm:Hchar}
For a closed manifold $M$ with an oriented foliation $\F$ and a topologically trivial flat real line bundle $(\E,\nabla)$ with basic holonomy over $(M,\F)$, the following are equivalent:
\begin{enumerate}
\item $(M,\F)$ is strongly tense with $(\E,\nabla)$. 
\item There exist a total transversal $T$ of $(M,\F)$ and a nonnegative cocycle $\xi \in \Omega^{0}_{c}(\E|_{T}/\Hol_{T}(\F,\E))$ such that $\supp^{\F}_{+} \xi=M$.
\item For any total transversal $T$ of $(M,\F)$ and any compact set $K$ in $T$, there exists a nonnegative section $\zeta \in \Omega^{0}_{c}(T;\E|_{T})$ such that $K \subset \supp_{+}^{T} \xi$ and $d_{\nabla}\zeta \in \Lambda^{1}_{c,\Hol_{T}(\F,\E)}$.
\end{enumerate}
\end{thm}

We will prove Theorem \ref{thm:Hchar} after introducing integration along leaves. Let $(\E,\nabla)$ be a flat vector bundle over $M$. As in the case where $\E$ is trivial due to Haefliger~\cite[Theorem~3.1]{Haefliger1980}, the integration along the leaves of $\F$ 
\begin{equation}\label{eq:fint}
\fint_{\F} : \Omega^{\bullet}_{c}(M;\E) \longrightarrow \Omega^{\bullet-p}_{c}(\Tr \F ;\E)\;,
\end{equation}
is defined as follows. Take an open covering $\{U_{i}\}_{i \in I}$ of $M$ and a total transversal $T = \sqcup_{i \in I} T_{i}$ so that
\begin{enumerate}
\item $\{U_{i}\}_{i \in I}$ is locally finite,
\item $U_{i}$ is simply connected for $\forall i \in I$, 
\item the intersection of any plaque in $U_{i}$ and any plaque in $U_{j}$ is simply connected or empty for $\forall i$, $j \in I$,  
\item the intersection $P \cap T_{i}$ is a point for any plaque $P$ in $U_{i}$, $\forall i \in I$, and
\item the map $\pi_{i} : U_{i} \to T_{i}$ which maps each plaque $P$ in $U_{i}$ into $P \cap T_{i}$ is a disk bundle for $\forall i \in I$.
\end{enumerate}
Since $\E|_{U_{i}}$ and $\E|_{T_{i}}$ are trivial flat vector bundles for each $i$, we have
\begin{align*}
\Omega^{\bt}_{c}(U_{i};\E|_{U_{i}}) & \cong \Omega^{\bt}_{c}(U_{i})^{\oplus r}\;, & \Omega^{\bt}_{c}(T_{i};\E|_{T_{i}}) & \cong \Omega^{\bt}_{c}(T_{i})^{\oplus r}\;,
\end{align*}
where $r = \rank \E$. Then $\fint_{\pi_{i}} : \Omega^{\bt}_{c}(U_{i};\E|_{U_{i}})  \to \Omega^{\bt}_{c}(T_{i};\E|_{T_{i}})$ is defined by the $r$-times direct sum of the integration $\Omega^{\bt}_{c}(U_{i}) \to \Omega^{\bt}_{c}(T_{i})$ along $\pi_{i}$. Then~\eqref{eq:fint} is defined, for $\alpha \in \Omega^{k}_{c}(M;\E)$, by 
\[
\fint_{\F} \alpha = \sum_{i \in I} \fint_{\pi_{i}} \rho_{i} \alpha + \Lambda^{k - p}_{c, \Hol_{T}(\F,\E)}\;.
\] 
where $\{\rho_{i}\}_{i \in I}$ is a partition of unity subordinate to $\{U_{i}\}_{i \in I}$. As in the case where $\E$ is trivial~\cite[Theorem~3.1]{Haefliger1980}, we can check that $\fint_{\F}$ is a well-defined surjective continuous homomorphism which commutes with $d_{\nabla}$. By the argument in \cite[Theorem 3.2 in Section]{Haefliger1980}, we have the following.
\begin{lem}\label{lem:fibint}
The kernel of $\fint_{\F}$ is equal to the subspace of $\Omega^{\bullet}_{c}(M;\E)$ generated by $\F$-trivial forms and the differential of $\F$-trivial forms. For $i$, $j \in I$ with $U_{i} \cap U_{j} \neq \emptyset$, let $h_{ij}$ be the unique maximal element of $\Hol_{T}(\F,\E)$ such that $\Dom (h_{ij})_{T} \subset T_{i}$ and $\image (h_{ij})_{T} \subset T_{j}$, where $h_{T}$ is the map induced on $T$ by $h$. More generally, for $\omega \in \Omega^{\bt}(M;\E)$ which may not be compactly supported, if we have $\omega = \sum_{i}\omega_{i}$ and, for each $i$, we have $\fint_{\pi_{i}} \omega_{i} = \sum_{j}(h_{ij}^{*}\alpha_{ji} - \alpha_{ij})$ for some $\omega_{i} \in \Omega^{\bt}_{c}(U_{i};\E|_{U_{i}})$ and some $\alpha_{ji} \in \Omega^{\bt}_{c}(T_{j};\E|_{T_{j}})$, then $\omega =  \theta + d_{\nabla}\theta'$ for some $\F$-trivial forms $\theta$, $\theta'$.
\end{lem}

\begin{proof}[Outline of the proof of Theorem~\ref{thm:Hchar}]
Let us show (ii) by (i). Assume that $(M,\F)$ is strongly tense. By Proposition \ref{prop:tense}, there exists a relatively $\F$-closed form $\eta \in \Omega^{p}(M;\E)$ whose restriction to each leaf is nowhere vanishing. Take an open covering $\{U_{i}\}$ and a transversal $T$ to define the integration along leaves $\Omega^{\bullet}_{c}(M;\E) \to \Omega^{\bullet-p}_{c}(\Tr \F ;\E)$. By Lemmas \ref{lem:fibint}, it follows that $d_{\nabla} \fint_{\F} \eta = \fint_{\F}  d_{\nabla} \eta = 0$. Then, since $d_{\nabla}\sum_{i \in I} \fint_{\pi_{i}} \rho_{i} \alpha \in \Lambda^{1}_{c, \Hol_{T}(\F,\E)}$, we see that $\sum_{i \in I} \fint_{\pi_{i}} \rho_{i} \eta$ satisfies the condition of (ii). A twisted version of the proof of~\cite[Theorem in Section~3.2]{Haefliger1980} implies (iii) by (ii). The proof of (i) by (iii) is omitted because it is a special case of Theorem \ref{thm:domi2} below.
\end{proof}

We will show the following prescribed version of Theorem \ref{thm:domi}, which immediately implies Theorem \ref{thm:domi}.
\begin{thm}\label{thm:domi2}
Let $(M,\F)$ be a possibly open foliated manifold with a topologically trivial flat real line bundle $(\E,\nabla)$ with basic holonomy. If there exists a nonnegative $0$-cocycle $\xi$ in $\Omega^{0}_{lc}(\Tr \F;\E)$ such that $\supp_{+}^{\F}\xi = M$, then $(M,\F)$ is strongly tense with respect to $(\E,\nabla)$.
\end{thm}

The following is proven by an argument analogous to~\cite[Lemma~6.3]{AlvarezLopez1992}, which reduces the proof of Theorem \ref{thm:domi2} to the case where $\F$ is oriented.
\begin{prop}\label{prop:fincov}
Let $\pi : (M,\F) \to (M',\F')$ be a regular finite covering map of foliated manifolds with deck transformation group $\G$. Let $(\E',\nabla')$ be a flat real line bundle over $M'$. Let $(\E,\nabla) = (\pi^{*}\E', \pi^{*}\nabla')$. Then $(M,\F)$ is strongly tense with $(\E,\nabla)$ if and only if $(M',\F')$ is strongly tense with $(\E',\nabla')$. 
\end{prop}

\begin{proof}
The ``if'' part is trivial. We show the ``only if'' part. It suffices to prove the case where $\F$ is orientable. We assume that $\F$ is orientable and $(M,\F)$ is strongly tense with $(\E,\nabla)$. By Proposition~\ref{prop:tense}, we can assume that $\E'$ is topologically trivial. By Proposition \ref{prop:RS}, there exists a relatively $\F$-closed form $\eta \in \Omega^{p}(M;\E)$ whose restriction to any leaf is nowhere vanishing. Let us define $\varepsilon : \G \to \{\pm 1\}$ by $\varepsilon(\gamma)=1$ if $\gamma$ preserves the orientations of $\F$ and $\varepsilon (\gamma)=-1$ otherwise. Let $\eta^{\G} = \sum_{\gamma \in \G} \varepsilon (\gamma) \gamma^{*}(\eta)$. Then $\eta^{\G}$ is relatively $\F$-closed and the restriction to any leaf is nowhere vanishing. Moreover $\gamma^{*}\eta^{\G} = \varepsilon(\gamma) \eta^{\G}$ for any $\gamma \in \G$. Thus there exists a $\G$-invariant metric $g$ such that $\eta^{\G}$ is a local characteristic form at each point on $M$. This $g$ induces a strongly tense metric on $(M',\F')$, which concludes the proof by Proposition~\ref{prop:tense}.
\end{proof}

\begin{rem}
It is not clear that, for a finite covering map $(M,\F) \to (M',\F')$ of foliated manifolds, if $(M,\F)$ is tense, then $(M',\F')$ is tense.
\end{rem}

The basic idea of the following proof of Theorem \ref{thm:domi2} is to extend the support of a representative $\zeta$ of given $0$-cocycle $\xi$ in $\Omega^{0}_{lc}(\Tr \F;\E)$ so that we can construct a relatively $\F$-closed characteristic form from it. Haefliger's argument allows us to take $\zeta$ so that its support contains any compact subset in the transversal. But, in the case where the manifold is open, since characteristic forms are nowhere vanishing, roughly we need that the support of $\zeta$ contains a union of a countable number of compact disks. In this construction, we need to pay attention to certain convergence problem.

\begin{proof}[Proof of Theorem \ref{thm:domi2}]
By Proposition \ref{prop:fincov}, it suffices to show the strong tenseness of $\F$ in the case where $\F$ is oriented. Here we consider the case where $M/\overline{\F}$ is not compact. The proof of the other case is almost identical. By assumption, there exists a nonnegative $0$-cocycle $\xi^{0}$ in $\Omega^{0}_{lc}(\Tr \F;\E)$ such that $\supp_{+}^{\F}\xi^{0} = M$. Take a total transversal $T'$ such that the intersection of $T'$ and each leaf of $\F$ does not have accumulation points. Let $\zeta' \in \Omega^{0}_{lc}(T';\E|_{T'})$ be a nonnegative representative of $\xi^{0}$. Take a sequence of compact subsets $K_{0} \subset K_{1} \subset \cdots \subset K_{m} \subset \cdots$ of $T'$ such that $\cup_{j} K_{j} = \supp \zeta'$. Here, if $M/\overline{\F}$ is compact, then $\zeta'$ is compactly supported, and we can take $\supp \zeta'$ as $K_{0}$ and we do not need to take other $K_{j}$. Fix a Riemannian metric on $T$. For each $j$, there exists $\epsilon_{j} > 0$ such that any point on $K_{j}$ has a geodesically convex neighborhood of diameter $\epsilon_{j}$ in $T$. We cover $\overline{K_{j} \setminus K_{j-1}}$ by geodesically convex neighborhoods $T_{j}^{1}$, $\ldots$, $T_{j}^{N_{j}}$ of radius $\epsilon_{j}$ of a finite number of points $x^{1}_{j}$, $\ldots$, $x^{N_{j}}_{j}$ in $K_{j} \setminus K_{j-1}$. Reorder these geodesically convex neighborhoods, and denote them by $\{T_{i}\}_{i \leq 0}$. Let $T_{-} = \cup_{i \leq 0} T_{i}$. By a partition of unity on $T_{i}$ subordinate to $\{T_{i}\}_{i \leq 0}$, we obtain $\zeta^{0} \in \Omega^{0}_{lc}(T_{-};\E|_{T_{-}})$ which represents $\xi^{0}$. Take a geodesically convex subset $S_{i}$  for each $i$ so that $\overline{S_{i}} \subset T_{i}$ and $\zeta^{0}|_{S_{i}}$ is nowhere vanishing. Take open coverings $\{U_{i}\}_{i \in \ZZ}$, $\{V_{i}\}_{i \in \ZZ}$ of $M$ and transversals $T_{i}$, $S_{i}$ for $i \in \ZZ_{>0}$ as follows:
\begin{itemize}
\item Let $T = \sqcup_{i \in \ZZ} T_{i}$ and  $S = \sqcup_{i \in \ZZ} S_{i}$. Then the pair of $\{U_{i}\}_{i \in \ZZ}$ and $T$ (resp.\ $\{V_{i}\}_{i \in \ZZ}$ and $S$) satisfy the conditions (i), $\ldots$, (v) required to define the integration along leaves $\fint_{\F}$ in \eqref{eq:fint}.
\item $U_{i}$ and $V_{i}$ are relatively compact in $M$ and $\overline{V}_{i} \subset U_{i}$.
\item $S_{i} = V_{i} \cap T_{i}$ for $i \in \ZZ$.
\end{itemize}
For any $k \in \ZZ_{+}$, we can construct $\zeta^{k} \in \Omega^{0}_{lc}(T;\E|_{T})$ such that
\begin{enumerate}
\item  $\supp \zeta^{k} \subset \supp \zeta^{0} \cup T_{k}$, 
\item $S_{k} \subset \supp_{+}^{T} \zeta^{k}$ and 
\item $d_{\nabla} \zeta^{k} \in \Lambda^{1}_{lc, \Hol_{T}(\F,\E)}$
\end{enumerate}
as follows: Cover $S_{k}$ with a finite number of open subsets $\{W_{\lambda}\}_{\lambda \in A_{k}}$ of $T$ so that there exists $h_{\lambda} \in \Hol_{T}(\F,\E)$ such that $W_{\lambda} \subset \Dom (h_{\lambda})_{T}$ and $\image (h_{\lambda})_{T} \subset \supp_{+}^{T_{-}} \zeta^{0}$, where $(h_{\lambda})_{T}$ is the map induced on $T$ by $h_{\lambda}$. Take a smooth function $\rho_{\lambda}$ on $T_{-}$ such that $0 \leq \rho_{\lambda} < 1/\# A_{k}$ and $\rho_{\lambda}$ is nowhere vanishing on $(h_{\lambda})_{T}(W_{\lambda})$. Let 
\[
\zeta^{k} = \zeta^{0} + \sum_{\lambda \in A_{k}} h_{\lambda}^{*} (\rho_{\lambda} \zeta^{0}) - \sum_{\lambda  \in A_{k}} \rho_{\lambda} \zeta^{0}\;,
\]
which is nonnegative and $\zeta^{k} - \zeta^{0} \in \Lambda^{0}_{c, \Hol_{T}(\F,\E)}$. Then $\zeta^{k}$ satisfies the above conditions. For $(i,j) \in \ZZ^{2}$ with $U_{i} \cap U_{j} \neq \emptyset$, let $h_{ij}$ be the unique element of $\Hol_{T}(\F,\E)$ such that $\Dom (h_{ij})_{T} \subset T_{i}$ and $\image (h_{ij})_{T} \subset T_{j}$. We can decompose $\zeta^{k}$ as
\[
\zeta^{k} = \zeta^{0} - \sum_{i,j} (h_{ij}^{*} \alpha_{ji}^{k} - \alpha_{ji}^{k})
\]
for some $\alpha_{ji}^{k} \in \Omega_{c}^{0}(T_{j};\E|_{T_{j}})$ so that $\alpha_{ji}^{k} = 0$ except a finite number of $(i,j) \in \ZZ^{2}$. Let $a_{0}=1$ and, for $k > 0$, let
\[
a_{k} = \frac{1}{2^{k} \max_{(i,j)} \{ \|\alpha_{ji}^{k}\|_{k}, 1\}}\;,
\]
where $\|\cdot\|_{k}$ is a $C^{k}$-norm on $\Omega^{0}_{c}(T;\E|_{T})$. Since the limit $\lim_{N \to \infty} \sum_{0 \leq k \leq N} a_{k} \zeta^{k}$ exists in $\Omega^{0}(T;\E|_{T})$, let $\zeta = \lim_{N \to \infty} \sum_{0 \leq k \leq N} a_{k} \zeta^{k}$. Then $\zeta$ satisfies that $S \subset \supp_{+}^{T} \zeta$ and $d_{\nabla} \zeta = \sum_{i,j} h_{ij}^{*} \alpha_{ji} - \alpha_{ji}$ for some $\alpha_{ji} \in \Omega_{c}^{0}(T_{j};\E|_{T_{j}})$. For each $i$, we take a closed $p$-form $\chi_{i}$ on $U_{i}$ so that $V_{i} \subset \supp \chi_{i}$, $\fint_{\pi_{i}} \chi_{i} = 1$ and $\supp \chi_{i} \cap \pi^{-1}_{i}(x)$ is compact for any $x \in T_{i}$. Let $\eta_{i} =  \chi_{i} \otimes \pi_{i}^{*}(\zeta|_{T_{i}})$ and $\eta = \sum_{i} \eta_{i}$. Here we have 
\[
\fint_{\pi_{i}} d_{\nabla}\eta_{i} = d_{\nabla} \fint_{\pi_{i}} \eta_{i}  = d_{\nabla}\zeta|_{T_{i}} = \sum_{i,j} (h_{ij}^{*} \alpha_{ji} - \alpha_{ij})\;.
\]
Then, by applying Lemma \ref{lem:fibint} to $d_{\nabla}\eta = \sum_{i} d_{\nabla}\eta_{i}$, there exist $\F$-trivial forms $\theta$ and $\theta'$ such that $d_{\nabla}(\eta - \theta') = \theta$. It means that $\eta - \theta'$ is relatively $\F$-closed. Since $\eta - \theta'$ is nowhere vanishing on $M$ by construction, it follows that $(M,\F)$ is strongly tense with $(\E,\nabla)$ by Proposition \ref{prop:RS}.
\end{proof}

\section{Isomorphism between Haefliger cohomology and reduced Haefliger cohomology of Riemannian foliations}
\label{sec:4}

Let us recall some fundamental notions on pseudogroups. 

\begin{defn}
A pseudogroup $\H$ on a manifold $T$ is called {\em Riemannian} if there exists a Riemannian metric $g$ on $T$ such that $h^{*}g = g$ on $\Dom h$ for any $h\in \H$.
\end{defn}

The holonomy pseudogroup of a Riemannian foliation is Riemannian. The following notion is important in the context of Riemannian pseudogroups.

\begin{defn}\label{defn:comp}
A pseudogroup $\H$ on a manifold $T$ is called {\em complete} if, for any two points $x$, $y \in T$, there exists an open neighborhood $U_{x}$ (resp., $U_{y}$) of $x$ (resp., $y$) in $T$ such that, for every $h\in \H$ and $z\in U_{x} \cap \Dom h$ with $h(z) \in U_{y}$, there exists some $\tilde{h}\in \H$ so that $U_{x} \subset \Dom \tilde{h}$ and the germ of $\tilde{h}$ at $z$ is equal to the germ of $h$ at $z$. 
\end{defn}

A vector field $X$ on $T$ is called $\H$-{\em invariant} if $h_{*}X = X$ on $\image h$ for any $h\in \H$. 
\begin{defn}
A pseudogroup $\H$ on a manifold $T$ is called {\em locally parallelizable} if there exist $\H$-invariant vector fields $X_{1}$, $\ldots$, $X_{n}$ on $T$ such that $\{X_{1}, \ldots, X_{n}\}$ gives a trivialization of the tangent bundle of $T$. Such collection $\{X_{1}, \ldots, X_{n}\}$ of $\H$-invariant vector fields is called a {\em parallelism} of $\H$.
\end{defn}

We will prove the following result, a pseudogroup version of Theorem~\ref{thm:masa}, from which Theorem~\ref{thm:masa} immediately follows.
\begin{thm}\label{thm:masap}
Let $(\E,\nabla)$ be a flat vector bundle over a manifold $T$. Let $\H$ be a complete linear pseudogroup on $\E$ which induces a Riemannian pseudogroup $\H_{T}$ on $T$. Then the natural maps $H^{\bt}_{c}(\E/\H) \to \widehat{H}^{\bt}_{c}(\E/\H)$ and $H^{\bt}_{lc}(\E/\H) \to \widehat{H}^{\bt}_{lc}(\E/\H)$ are isomorphisms.
\end{thm}

We show the following as a byproduct, which immediately implies Theorem \ref{thm:findim}.
\begin{thm}\label{thm:findimp}
Under the hypothesis of Theorem \ref{thm:masap}, if moreover the space of orbit closures $T/\overline{\H_{T}}$ of $\H_{T}$ is compact, then we have $\dim H^{\bt}_{c}(\E/\H) < \infty$.
\end{thm}
Note that, if the space of orbit closures $T/\overline{\H_{T}}$ of $\H_{T}$ is compact, then we have that $H^{\bt}_{c}(\E/\H) \cong H^{\bt}_{lc}(\E/\H)$.

Let $n=\dim T$. First we consider the case where 
\begin{enumerate}
\item $\H_{T}$ is a locally parallelizable pseudogroup with parallelism $\{X_{1}, \ldots, X_{n}\}$ and
\item the space of orbit closures $T/\overline{\H_{T}}$ of $\H_{T}$ is compact.
\end{enumerate}
Since $T/\overline{\H_{T}}$ is compact, there exists a relatively compact set $U$ in $T$ whose $\H_{T}$-orbit is equal to $T$. Let $V$ be the real vector space generated by $X_{1}$, $\ldots$, $X_{n}$. We fix a compactly supported volume form $\mu$ on $V$. Then, making $\supp \mu$ smaller if necessary, we can assume that the time one map $\phi_{X} : U \to T$ of the flow generated by any $X\in \supp \mu$ is well-defined. Consider an operator
\begin{equation}\label{eq:s}
\begin{array}{cccc}
s : & \Omega_{c}^{\bt}(U;\E|_{U}) & \longrightarrow        & \Omega_{c}^{\bt}(T;\E) \\
& \alpha                & \longmapsto &  \int_{V} (\phi_{X}^{*}\alpha) \mu(X)\;.
\end{array}
\end{equation} 
Since $U$ is relatively compact, the following is well known.
\begin{lem}\label{lem:compact3}
The operator $s$ is compact.
\end{lem}
This operator $s$ is a transverse version of Sarkaria's smoothing operator~\cite{Sarkaria1978}. Masa~\cite{Masa1992} and Dom\'{i}nguez~\cite{Dominguez1998} used Sarkaria's smoothing operators, which are Hilbert-Schmidt integration operators on closed manifolds and hence compact~\cite[Eq.~13 of p.~694]{Sarkaria1978}. Note that even if $\H_{T}$ is the holonomy pseudogroup of a compact foliated manifold, $T$ may not be compact. In that case,  Hilbert-Schmidt integration operators on $T$ may not be compact in general.

We take $\{h_{i}\}_{i=1}^{\infty} \subset \H$ and open set $U_{i} \subset U \cap \Dom (h_{i})_{T}$ so that $\U=\{(h_{i})_{T}(U_{i})\}_{i=1}^{\infty}$ is a locally finite covering of $T$, where $(h_{i})_{T}$ is the map induced on $T$ by $h_{i}$. Let $\{\rho_{i}\}$ be a partition of unity on $T$ subordinate to $\U$. Consider a map
\[
\begin{array}{cccc}
\Phi_{\U} : & \Omega^{\bt}_{c}(T;\E) & \longrightarrow & \Omega^{\bt}_{c}(U;\E|_{U})\\
      &     \alpha   & \longmapsto & \sum_{i=1}^{\infty}h_{i}^{*}(\rho_{i}\alpha)\;.
\end{array}
\]
Let $\hat{s}$ be the composite
\begin{equation}\label{eq:So}
\xymatrix{ \Omega^{\bt}_{c}(T;\E) \ar[r]^<<<<<{\Phi_{\U}} & \Omega^{\bt}_{c}(U;\E|_{U}) \ar[r]^<<<<<{s} & \Omega^{\bt}_{c}(T;\E)\;. }
\end{equation}
Here $s$ is compact by Lemma~\ref{lem:compact3}. Since the composite of any continuous operator with a compact operator is compact, $\hat{s}$ is compact. We will apply the following fact.
\begin{thm}[see {\cite[Corollary~2 in Section~VIII]{RobertsonRobertson1980}}]\label{thm:des}
Let $B$ be a Hausdorff locally convex topological vector space and $\sigma : B \to B$ a compact operator. There exists $r\in \ZZ_{>0}$ such that
\begin{equation*}
B = \ker (1 - \sigma)^{r} \oplus \image (1 - \sigma)^{r}\;,
\end{equation*}
where $\ker (1 - \sigma)^{r}$ is closed and of finite dimension, $\image (1 - \sigma)^{r}$ is closed and
\begin{align*}
\ker (1 - \sigma)^{r} & = \ker (1 - \sigma)^{r+1}\;, \\
\image (1 - \sigma)^{r} & = \image (1 - \sigma)^{r+1}\;.
\end{align*}
\end{thm}
Then we get a positive integer $r$ and a decomposition
\begin{equation*}
\Omega^{\bt}_{c}(T;\E) = \K^{\bt} \oplus \I^{\bt}\;, 
\end{equation*}
where $\K^{\bt} = \ker (1 - \hat{s})^{r}$ and $\I^{\bt} = \image (1 - \hat{s})^{r}$ with the properties mentioned in Theorem~\ref{thm:des}. Since $\L^{\bt}_{c, \H}$ is locally convex and invariant under $\hat{s}$, Theorem~\ref{thm:des} also yields a similar decomposition
\begin{equation*}
\L^{\bt}_{c, \H} = (\L^{\bt}_{c, \H} \cap \K^{\bt}) \oplus (\L^{\bt}_{c, \H} \cap \I^{\bt} )\;.
\end{equation*}
Thus we get a decomposition
\begin{equation}\label{eq:dec0}
\Omega^{\bt}_{c}(\E/\H) = \Omega^{\bt}_{c}(T;\E)/\L^{\bt}_{c, \H} = \big( \K^{\bt} / (\L^{\bt}_{c, \H} \cap \K^{\bt}) \big) \oplus \big( \I^{\bt}/ (\L^{\bt}_{c, \H} \cap \I^{\bt}) \big)\;.
\end{equation}
It is well known (see, for example,~\cite[Theorem~9.1]{Treves1967}) that any finite dimensional Hausdorff topological vector space $B$ over $\RR$ is isomorphic to $\RR^{\dim B}$ with the product topology and hence it is closed. Then $\L^{\bt}_{c, \H} \cap \K^{\bt}$ is a closed subspace of $\K^{\bt}$, which implies that $\K^{\bt} / (\L^{\bt}_{c, \H} \cap \K^{\bt})$ is Hausdorff. Letting $\Xi^{\bt}=\K^{\bt} / (\L^{\bt}_{c, \H} \cap \K^{\bt})$ and $\Upsilon^{\bt}=\I^{\bt}/ (\L^{\bt}_{c, \H} \cap \I^{\bt})$, we get the following.
\begin{lem}\label{lem:decomposition}
There is a decomposition as a differential complex
\begin{equation}\label{eq:dec}
\Omega^{\bt}_{c}(\E/\H)  = \Xi^{\bt} \oplus \Upsilon^{\bt}
\end{equation}
such that 
\begin{enumerate}
\item $\Xi^{\bt}$ is Hausdorff of finite dimension,
\item $(1 - \hat{s})^{r}$ is the second projection $\Omega^{\bt}_{c}(\E/\H) \to \Upsilon^{\bt}$ and
\item $(1 - \hat{s})^{r+1}$ is a differential automorphism of $\Upsilon^{\bt}$.
\end{enumerate}
\end{lem}

Let $\oO^{\bt}$ be the closure of $0$ in $\Omega^{\bt}_{c}(\E/\H)$. Since $\Xi^{\bt}$ is Hausdorff by Lemma~\ref{lem:decomposition}-(i), $\oO^{\bt}$ is a subcomplex of $\Upsilon^{\bt}$. Thus, by taking the quotient of the both sides of~\eqref{eq:dec} by $\oO^{\bt}$, we get a decomposition of $\widehat{\Omega}^{\bt}_{c}(\E/\H)$. Here $\hat{s}$ induces a map $\widehat{\Omega}^{\bt}_{c}(\E/\H) \to \widehat{\Omega}^{\bt}_{c}(\E/\H)$, which is denoted by the same symbol $\hat{s}$. The following proposition is a direct consequence of Lemma~\ref{lem:decomposition}.
\begin{lem}\label{lem:decomposition1}
There is a decomposition of a differential complex
\begin{equation}\label{eq:dec2}
\widehat{\Omega}^{\bt}_{c}(\E/\H) = \Xi^{\bt} \oplus \Upsilon^{\bt}/\oO^{\bt}
\end{equation}
such that
\begin{enumerate}
\item $(1 - \hat{s})^{r}$ is the projection $\widehat{\Omega}^{\bt}_{c}(\E/\H) \to \Upsilon^{\bt}/\oO^{\bt}$ and
\item $(1 - \hat{s})^{r+1}$ is a differential automorphism of $\Upsilon^{\bt}/\oO^{\bt}$.
\end{enumerate}
\end{lem}

Let 
\begin{equation*}
\begin{array}{cccc}
h : & \Omega^{\bt}_{c}(T;\E) & \longrightarrow & \Omega^{\bt-1}_{c}(T;\E) \\
    & \alpha    & \longmapsto & -\int_{V} \Big( \int_{0}^{1} (\iota_{tX} \phi_{tX}^{*}\alpha) dt \Big) \mu(X)\;.
\end{array}
\end{equation*}
We show
\begin{lem}\label{lem:t}
For $\alpha\in \Omega^{\bt}_{c}(T;\E)$, we have
\begin{equation*}
\Big(\int_{V} \mu\Big)\alpha - s\alpha = (d_{\nabla}h + h d_{\nabla})\alpha\;. 
\end{equation*}
\end{lem}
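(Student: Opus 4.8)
The plan is to recognize Lemma~\ref{lem:t} as a parametrized version of the classical Cartan homotopy formula, integrated over the vector space $V$ against the volume form $\mu$. The smoothing operator $s$ averages the pullbacks $\phi_X^*\alpha$ over all $X \in V$, so the claim is essentially that $s$ is chain-homotopic to $(\int_V \mu)\cdot\mathrm{id}$, with the homotopy given explicitly by $h$. The key computational input is the fundamental identity governing the flow $\phi_{tX}$: for each fixed $X \in V$, we have $\frac{d}{dt}\phi_{tX}^*\alpha = \phi_{tX}^*(\mathcal{L}_X \alpha)$, where $\mathcal{L}_X$ is the Lie derivative along $X$. Since $(\E,\nabla)$ is a flat vector bundle and $d_\nabla$ commutes with pullback by the flow, Cartan's formula holds in the twisted setting: $\mathcal{L}_X = d_\nabla \iota_X + \iota_X d_\nabla$ on $\Omega_c^\bt(T;\E)$.

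First I would fix $X \in V$ and integrate the flow identity in $t$ from $0$ to $1$, obtaining
\begin{equation*}
\phi_X^*\alpha - \alpha = \int_0^1 \phi_{tX}^*(\mathcal{L}_X \alpha)\, dt = \int_0^1 \phi_{tX}^*\big(d_\nabla \iota_X \alpha + \iota_X d_\nabla \alpha\big)\, dt\;.
\end{equation*}
Next I would commute $d_\nabla$ past the pullback $\phi_{tX}^*$ and past the $t$-integral (both legitimate since $d_\nabla$ commutes with $\phi_{tX}^*$ and with integration in the parameter $t$), rewriting the right-hand side as $d_\nabla\big(\int_0^1 \phi_{tX}^*\iota_X\alpha\,dt\big) + \int_0^1 \phi_{tX}^*\iota_X d_\nabla\alpha\,dt$. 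Then I would multiply by $\mu(X)$ and integrate over $X \in V$. The left-hand side becomes $s\alpha - (\int_V \mu)\alpha$, while the right-hand side, after again pulling $d_\nabla$ outside the $X$-integral, assembles precisely into $-(d_\nabla h + h d_\nabla)\alpha$ by the definition of $h$; the sign matches because $h$ carries an explicit minus sign.

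The main obstacle is purely one of justifying the interchange of operations: differentiation under the integral sign in $t$, commuting $d_\nabla$ with the double integral over $V$ and $[0,1]$, and confirming that every integrand stays compactly supported so that the integrals land back in $\Omega_c^\bt(T;\E)$. Since $\H$ is uniform, the flows $\phi_X$ are globally defined (as noted before the statement), and $\mu$ has compact support in $V$, so only finitely many directions contribute nontrivially to any fixed compact region; this keeps all supports controlled and makes the Fubini-type interchanges valid. Once these analytic points are dispatched, the identity is a formal consequence of Cartan's formula, and I expect the argument to occupy only a few lines of honest computation. The genuinely delicate bookkeeping is the sign and the placement of the factor $\int_V \mu$, which I would track carefully through the two integrations.
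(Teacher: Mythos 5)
Your proof is correct and follows essentially the same route as the paper's: the fundamental theorem of calculus applied to $t \mapsto \phi_{tX}^{*}\alpha$, Cartan's formula $\theta(X) = d_{\nabla}\iota(X) + \iota(X)d_{\nabla}$ in the twisted setting, and commutation of $d_{\nabla}$ with the pullbacks and with the integrals over $[0,1]$ and $V$. The only cosmetic difference is that you write the integrand as $\phi_{tX}^{*}(\iota_{X}\alpha)$ whereas the definition of $h$ uses $\iota_{X}\phi_{tX}^{*}\alpha$; these agree because $X$ is invariant under its own flow, so this is harmless.
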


\begin{proof}
For a vector field $X$ on $T$, the inner product and the Lie derivative with respect to $X$ on $\Omega^{\bt}_{c}(T;\E)$ is denoted by $\iota(X)$ and $\theta(X)$, respectively. Then
\begin{align*}
\Big(\int_{V} \mu \Big)\alpha - s\alpha & = - \int_{V} (\phi_{X}^{*}\alpha - \alpha) \mu(X) \\
& = -\int_{V} \left( \int_{0}^{1} \Big( \frac{d}{du}\Big|_{u = t} \phi_{uX}^{*}\alpha \Big)  dt \right) \mu(X) \\
& = -\int_{V} \left( \int_{0}^{1} (\theta(tX) \phi_{tX}^{*}\alpha) dt \right) \mu(X) \\
& = -\int_{V} \left( \int_{0}^{1} \big( (d_{\nabla} \iota(tX) + \iota(tX)d_{\nabla}) \phi_{tX}^{*}\alpha \big) dt \right) \mu(X) \\
& = (d_{\nabla}h + h d_{\nabla})\alpha\;.\qed 
\end{align*}
\renewcommand{\qed}{}
\end{proof}

Fix $\mu \in \Omega_{c}^{n}(V)$ so that $\int_{V} \mu = 1$. Since $\iota(X)$ commutes with the $\H$-action, we get a continuous map $h : \Omega^{\bt}_{c}(\E/\H) \to \Omega^{\bt}_{c}(\E/\H)$.
\begin{lem}\label{lem:Xi1} We have $H^{\bt}(\Upsilon^{\bt}) = 0$ and $ H^{\bt}(\Xi^{\bt}) \cong H^{\bt}_{c}(\E/\H)$.
\end{lem}
\begin{proof}
Note that $\hat{s}$ and $s$ induce the same map on $\Omega^{\bt}_{c}(\E/\H)$ by definition. Let $\alpha$ be a cocycle of $\Upsilon^{\bt}$. By Lemma~\ref{lem:decomposition}-(iii), there is a cocycle $\beta$ of $\Upsilon^{\bt}$ such that $(1 - s)^{r+1}\beta=\alpha$. By Lemma~\ref{lem:t}, we get 
\[ 
\alpha = (1 - s)^{r+1} \beta = (1 - s)^{r} (d_{\nabla}h +h d_{\nabla}) \beta = d_{\nabla} (1 - s)^{r} h \beta\;. 
\]
Here, $(1 - s)^{r} h \beta$ belongs to $\Upsilon^{\bt}$ by Lemma~\ref{lem:decomposition}-(ii). Thus $\alpha$ is a coboundary in $\Upsilon^{\bt}$, which implies $H^{\bt}(\Upsilon^{\bt}) = 0$. The second equation follows from $H^{\bt}(\Upsilon^{\bt}) = 0$ and Lemma~\ref{lem:decomposition}.
\end{proof}

Here $h$ induces a map $\widehat{\Omega}^{\bt}_{c}(\E/\H) \to \widehat{\Omega}^{\bt-1}_{c}(\E/\H)$ by its continuity. The following is proven in a way analogous to Lemma~\ref{lem:Xi1} by using Lemma~\ref{lem:decomposition1}. 
\begin{lem}\label{lem:Xi2} We have $H^{\bt}(\Upsilon^{\bt}/\oO^{\bt}) = 0$ and $ H^{\bt}(\Xi^{\bt}) \cong \widehat{H}_{c}^{\bt}(\E/\H)$.
\end{lem}

Thus, Theorem~\ref{thm:masap} for the case where $\H_{T}$ is locally parallelizable and $T/\overline{\H_{T}}$ is compact follows from Lemmas~\ref{lem:Xi1} and~\ref{lem:Xi2}.

Now, to consider the case where the base pseudogroup $\H_{T}$ is locally parallelizable, let us recall the structure theory of complete Riemannian pseudogroups due to Salem~\cite{Salem1988}, which is analogous to the Molino's structure theory of Riemannian foliations on compact manifolds. The following is an important class of locally parallelizable pseudogroups, which appears in Salem's theorem.
\begin{defn}
For a Lie group $G$, a pseudogroup $\H$ on $G$ is called of $G$-{\em Lie type} if $\H$ is generated by the left action of a subgroup $\G$ of $G$. If moreover $\G$ is dense in $G$, we say that $\H$ is of minimal $G$-Lie type.
\end{defn}
\begin{rem}
Note that this terminology is not common whereas the notion itself is common. Note also that the terminology {\em Lie pseudogroup} is commonly used for a different object.
\end{rem}
We need the following part of the analog of the Molino theory for complete Riemannian pseudogroups due to Salem. 
\begin{thm}[{\cite[Corollaires~1 and~2]{Salem1988}, see also~\cite[Appendix C by Salem]{Molino1988}}]\label{thm:Molino}
Let $\H$ be a complete Riemannian pseudogroup on a manifold $T$ whose orbit space $T/\H$ is connected. Let $T_{\#}$ be the orthonormal frame bundle over $T$. The pseudogroup $\H_{\#}$ on $T_{\#}$ naturally induced from $\H$ is complete and locally parallelizable. In general, for a complete locally parallelizable pseudogroup $\H_{\flat}$ on $T_{\flat}$ whose orbit space is connected, the following holds:
\begin{enumerate}
\item The space $T_{\flat}/\overline{\H_{\flat}}$ of the closures of the $\H_{\flat}$-orbits in $T_{\flat}$ is a smooth manifold and the projection $\pi : T_{\flat} \to T_{\flat}/\overline{\H_{\flat}}$ is a submersion. 
\item There exists a simply-connected Lie group $G$ such that, for each point of $x\in T_{\flat}/\overline{\H_{\flat}}$, there exists an open neighborhood $U$ of $x$ in $T_{\flat}/\overline{\H_{\flat}}$ such that $\H_{\flat}|_{\pi^{-1}(U)}$ is equivalent to the pseudogroup on $G \times U$ which is the product of a pseudogroup of minimal $G$-Lie type and the trivial pseudogroup on $U$.
\end{enumerate}
\end{thm}

For a complete linear pseudogroup on a flat vector bundle which induces a locally parallelizable pseudogroup on the base, we have the following direct consequence of Salem's theorem.
\begin{lem}\label{lem:Salem}
Let $(\E,\nabla)$ be a flat vector bundle over a manifold $T$. Let $\H$ be a complete linear pseudogroup on $\E$ which induces a locally parallelizable pseudogroup $\H_{T}$ on $T$. Assume that the orbit space $T/\H_{T}$ is connected. Let $\pi : T \to T/\overline{\H_{T}}$ be the canonical projection. Then, there exist a Lie group $G$, a flat vector bundle $\E_{G} \to G$, and a linear pseudogroup $\H_{G}$ on $\E_{G}$ which induces a pseudogroup of minimal $G$-Lie type on $G$ such that, for each point $z$ on $T$, there exists an open disk neighborhood $U$ of $\pi(z)$ in $T/\overline{\H_{T}}$ such that the pseudogroup $\H|_{\E|\pi^{-1}(U)}$ is equivalent to a pseudogroup $\H_{G} \times 1_{U}$ on $\E_{G} \boxtimes \RR_{U}$ which is the product of $\H_{G}$ and the trivial pseudogroup $1_{U}$ on $U$, where $\E_{G} \boxtimes \RR_{U}$ denotes the vector bundle $\E_{G} \times U \to G \times U$.
\end{lem}

To prove Theorem~\ref{thm:masap} for the case where $\H_{T}$ is locally parallelizable, we can assume that $T/\H_{T}$ is connected. Let $(\E,\nabla) \to T$ and $\H$ be as in the last lemma. There exist a Lie group $G$, a flat vector bundle $\E_{G} \to G$, and a linear pseudogroup $\H_{G}$ on $\E_{G}$ which satisfy the conditions of the last lemma. For each point $z$ on $T$, let $U$ be an open disk neighborhood of $\pi(z)$ in $T/\overline{\H_{T}}$ in the lemma. Let $\E_{U} = \E|_{\pi^{-1}(U)}$, $\H_{U} = \H|_{\E_{U}}$. Since $\H_{U}$ is equivalent to $\H_{G} \times 1_{U}$, we have
\begin{align*}
H^{\bt}_{c}(\E_{U}/\H_{U}) & \cong H^{\bt}_{c}\big((\E_{G} \boxtimes \RR_{U})/(\H_{G} \times 1_{U})\big)\;, \\ 
\widehat{H}^{\bt}_{c}(\E_{U}/\H_{U}) & \cong \widehat{H}^{\bt}_{c}\big((\E_{G} \boxtimes \RR_{U})/(\H_{G} \times 1_{U})\big)\;.
\end{align*}
Since the base pseudogroup of $\H_{G}$ is of minimal $G$-Lie type, we already proved Theorem~\ref{thm:masap} for $\H_{G}$. Then, since $\Om_{U}^{*} \otimes \Om_{U}$ is trivial, we have the following by Poincar\'{e} lemma:
\begin{multline*}
H^{\bt}_{c}\big((\E_{G} \boxtimes \RR_{U})/(\H_{G} \times 1_{U})\big) \cong H^{\bt}_{c}\big((\E_{G} \boxtimes \RR_{U}) \otimes \Om_{U}^{*} \otimes \Om_{U})/(\H_{G} \times 1_{U})\big) \cong \\
\cong H^{\bt - \dim U}_{c}\big((\E_{G} \otimes \Om_{U}^{*}) / \H_{G}\big) \cong \widehat{H}^{\bt - \dim U}_{c}\big((\E_{G} \otimes \Om_{U}^{*}) / \H_{G}\big) \\
\cong \widehat{H}^{\bt}_{c}\big((\E_{G} \boxtimes \RR_{U}) \otimes \Om_{U}^{*} \otimes \Om_{U})/(\H_{G} \times 1_{U})\big) \cong \widehat{H}^{\bt}_{c}\big((\E_{G} \boxtimes \RR_{U})/(\H_{G} \times 1_{U})\big)\;.
\end{multline*}
Thus we have $H^{\bt}_{c}(\E_{U}/\H_{U}) \cong \widehat{H}^{\bt}_{c}(\E_{U}/\H_{U})$. By taking a base of the topology of $T/\overline{\H_{T}}$ consisting of small disks, a standard argument with Mayer-Vietoris sequence implies that $H^{\bt}_{c}(\E/\H) \cong \widehat{H}^{\bt}_{c}(\E/\H)$ (see~\cite[Section 5.12]{GreubHalperinVanstone1972}). Since each element of $\Omega^{\bt}_{lc}\big((\E_{G} \boxtimes \RR_{U})/(\H_{G} \times 1_{U})\big)$ is represented by $\alpha \in \Omega^{\bt}(G \times U; \E_{G} \boxtimes \RR_{U})$ which is vertically compactly supported with respect to the second projection $G \times U \to U$, by Poincar\'{e} lemma, we have 
\begin{align*}
H^{\bt}_{lc}\big((\E_{G} \boxtimes \RR_{U})/(\H_{G} \times 1_{U})\big) & \cong H^{\bt}_{c}(\E_{G}/\H_{G})\;, \\
\widehat{H}^{\bt}_{lc}\big((\E_{G} \boxtimes \RR_{U})/(\H_{G} \times 1_{U})\big) & \cong \widehat{H}^{\bt}_{c}(\E_{G}/\H_{G})\;.
\end{align*}
Then the isomorphism $H^{\bt}_{lc}(\E/\H) \cong \widehat{H}^{\bt}_{lc}(\E/\H)$ is proven in a way similar to $H^{\bt}_{c}(\E/\H) \cong \widehat{H}^{\bt}_{c}(\E/\H)$. Thus Theorem~\ref{thm:masap} has been proven for the case where the base pseudogroup $\H_{T}$ is locally parallelizable.

Finally we will prove Theorem~\ref{thm:masap} in general, from which Theorem~\ref{thm:masa} follows. It will be done by a well known method based on a variant of the fact that the equivariant cohomology of a manifold with a free action of a connected Lie group is isomorphic to the cohomology of the quotient. See \cite[Theorem~3.7]{Dominguez1998} for similar arguments for Riemannian foliations.

Let $T_{\orr}$ be the orientation covering of $T$. Let $T_{\#}$ be a connected component of the orthonormal frame bundle of $T_{\orr}$. Let $\E_{\orr}$ (resp.\ $\E_{\#}$) be the flat vector bundles over $T_{\orr}$ (resp.\ $T_{\#}$) obtained by pulling back $\E$. Let $\H_{\orr}$ (resp.\ $\H_{\#}$) be the linear pseudogroup on $T_{\orr}$ (resp.\ $T_{\#}$) induced from $\H$. Let $\k=\so(n)$, where $n = \dim T$, and $A^{\bt} = \Omega^{\bt}_{c}(\E_{\#}/\H_{\#})$ and $\widehat{A}^{\bt} = \widehat{\Omega}^{\bt}_{c}(\E_{\#}/\H_{\#})$. Consider the Cartan complex 
\[
C_{\k}^{\bt}(A^{\bt}) := \big(({\textstyle \bigvee}\k^*) \otimes A^{\bt}\big)^{\k}\;,
\]
where $\bigvee \k^*$ denotes the symmetric algebra of $\k^{*}$. The differential $d_{\k}$ is defined by
\[
(d_{\k} \omega)(X):=d(\omega(X))-\iota(X)(\omega(X))
\]
for $X\in \k$, where we regard $\omega\in C_{\k}^{\bt}(A^{\bt})$ as a $\k$-equivariant polynomial map $\k\to A^{\bt}$. The $E^{1}$-terms of the spectral sequence of $(C_{\k}^{\bt}(A^{\bt}), d_{\k})$ with the filtration $F^{\ell} $ given by
\[
F^{\ell} = \bigoplus_{j \leq \ell} \big( ({\textstyle \bigvee{}^{j}} \k^{*}) \otimes A^{\bt} \big)^{\k}
\]
are $(\bigvee \k^{*})^{\k} \otimes H^{\bt}(A^{\bt})$, which are determined by $H^{\bt}(A^{\bt})$. We define $C_{\k}^{\bt}(\widehat{A}^{\bt})$ in a similar way. Since $\H_{\#}$ is complete and induces a locally parallelizable pseudogroup on $T_{\#}$ by Theorem~\ref{thm:Molino}, we already proved that the natural map $H^{\bt}(A^{\bt}) \to H^{\bt}(\widehat{A}^{\bt})$ is an isomorphism. Then it follows that $H^{\bt}(C_{\k}^{\bt}(A^{\bt})) \to H^{\bt}(C_{\k}^{\bt}(\widehat{A}^{\bt}))$ is an isomorphism, because it induces an isomorphism the $E_{1}$-terms. Here it is straightforward to see that $A^{\bt}$ admits the structures of a $\k$-dga with an algebraic connection. Since the $\k$-basic subalgebra $A^{\bt}_{\iota=0,\theta=0}$ of $A^{\bt}$ is isomorphic to $\Omega_{c}^{\bt}(\E_{\orr}/\H_{\orr})$, we have $H^{\bt}(C_{\k}^{\bt}(A^{\bt})) \cong H^{\bt}_{c}(\E_{\orr}/\H_{\orr})$  (see \cite[Theorem~IV in Section~8.17]{GreubHalperinVanstone1976}). We similarly have $H^{\bt}(C_{\k}^{\bt}(\widehat{A}^{\bt})) \cong \widehat{H}^{\bt}_{c}(\E_{\orr}/\H_{\orr})$. Then we have an isomorphism $H^{\bt}_{c}(\E_{\orr}/\H_{\orr}) \overset{\cong}{\to} \widehat{H}^{\bt}_{c}(\E_{\orr}/\H_{\orr})$. Clearly $\widehat{H}^{\bt}_{c}(\E/\H)$ and $H^{\bt}_{c}(\E/\H)$ are isomorphic to the $\ZZ/2\ZZ$-invariant subspace of $\widehat{H}^{\bt}_{c}(\E_{\orr}/\H_{\orr})$ and $H^{\bt}_{c}(\E_{\orr}/\H_{\orr})$, respectively. Hence the natural map induces $H^{\bt}_{c}(\E/\H) \cong \widehat{H}^{\bt}_{c}(\E/\H)$. The proof of the isomorphism $H^{\bt}_{lc}(\E/\H) \cong \widehat{H}^{\bt}_{lc}(\E/\H)$ is similar.

Here we outline the proof of Theorem \ref{thm:findimp}. If the base pseudogroup $\H_{T}$ of $\H$ is locally parallelizable and the space of orbit closures $T/\overline{\H_{T}}$ is compact, then Lemmas \ref{lem:decomposition}-(i) and \ref{lem:Xi1} imply that $\dim H^{\bt}_{c}(\E/\H) < \infty$. Let us consider the general case where $\H_{T}$ is Riemannian. Take $T_{\orr}$, $\E_{\orr}$, $\H_{\orr}$, $T_{\#}$, $\E_{\#}$ and $\H_{\#}$ as in the last paragraph. Since the base pseudogroup of $\H_{\#}$ is locally parallelizable by Salem's theorem (Theorem \ref{thm:Molino}) and $T_{\#}/\overline{\H_{\#}}$ is compact, $H^{\bt}_{c}(\E_{\#}/\H_{\#})$ is finite dimensional. Then, since the $E_{1}$-terms of the spectral sequences considered above is finite dimensional, it follows that the $E_{\infty}$-terms, which is isomorphic to $H^{\bt}_{c}(\E_{\orr}/\H_{\orr})$, are finite dimensional. Thus so is $H^{\bt}_{c}(\E/\H)$.

\section{Duality between reduced Haefliger cohomology and invariant cohomology of Riemannian foliations}\label{sec:6}

\subsection{Pairing with the invariant cohomology}

We recall the invariant cohomology of pseudogroups and the pairing with the Haefliger cohomology. Let $(\E,\nabla)$ be a flat vector bundle over an $n$-dimensional manifold $T$ with a linear pseudogroup $\H$. Let $\H_{T}$ be the base pseudogroup on $T$ induced by $\H$. Let $\Om_{T}$ be the orientation bundle of $T$. Note that $\H_{T}$ linearly acts on $\Om_{T}$. Recall that $\alpha \in \Omega^{\bt}(T;\E)$ is said to be {\em $\H$-invariant} if $h^{*} \alpha = \alpha$ on $\Dom h$ for any $h\in \H$. Let $\Omega^{\bt}_{\inv}(\H;\E)$ be the subcomplex of $\Omega^{\bt}(T;\E)$ consisting of $\H$-invariant forms. The cohomology of $\Omega^{\bt}_{\inv}(\H;\E)$ is called the invariant cohomology of $\H$ with values in $\E$ and denoted by $H^{\bt}_{\inv}(\H;\E)$. The wedge product
\begin{equation}\label{eq:pair1}
\Omega^{\bt}_{c}(T;\E) \times \Omega^{n-\bt}(T;\E^{*} \otimes \Om_{T}) \longrightarrow \Omega^{n}_{c}(T;\Om_{T})
\end{equation}
induces $\Omega^{\bt}_{c}(\E/\H) \times \Omega^{n-\bt}_{\inv}(\H;\E^{*} \otimes \Om_{T}) \longrightarrow \Omega^{n}_{c}(\Om_{T}/\H_{T})$ and hence 
\[
\widehat{\Omega}^{\bt}_{c}(\E/\H) \times \Omega^{n-\bt}_{\inv}(\H;\E^{*} \otimes \Om_{T}) \longrightarrow \widehat{\Omega}^{n}_{c}(\Om_{T}/\H_{T})\;,
\]
where $\widehat{\,\,}$ means the quotient by the closure of $0$. We will use the following observation.

\begin{prop}\label{prop:max}
Let $\H_{T}$ be a pseudogroup on an $n$-dimensional manifold $T$. 
Let $\Om_{T}$ be the orientation bundle of $T$. If the orbit space $T/\H_{T}$ is connected, then we have $H_{c}^{n}(\Om_{T}/\H_{T}) \cong \widehat{H}_{c}^{n}(\Om_{T}/\H_{T}) \cong \RR$.
\end{prop}

\begin{proof}
Let $L$ be the subspace of $H_{c}^{n}(T;\Om_{T})$ defined by
\begin{multline*}
L=\big\{ \, {\textstyle \sum}_{j=1}^{k} [\alpha_{j}] - h_{j}^{*}[\alpha_{j}] \, \big| \\
 h_{j} \in \H_{T}, \alpha_{j} \in \Omega^{\bt}_{c}(T;\Om_{T}), \pi^{-1}(\supp \alpha_{j}) \subset \Dom h_{j}, 1 \leq j \leq k \, \big\}\;,
\end{multline*}
where $\pi : \Om_{T} \to T$ is the bundle projection. By the last part of the cohomology exact sequence of 
\[
\xymatrix{0 \ar[r] & \L^{\bt}_{c, \H} \ar[r] & \Omega_{c}^{\bt}(T;\Om_{T}) \ar[r] & \Omega_{c}^{\bt}(\Om_{T}/\H_{T}) \ar[r] & 0\;,} 
\]
we get that $H_{c}^{n}(\Om_{T}/\H_{T}) \cong H_{c}^{n}(T;\Om_{T})/L$, which implies $H_{c}^{n}(\Om_{T}/\H_{T}) \cong \RR$ by assumption. Then $L$ is closed in $H_{c}^{n}(T;\Om_{T})$. So we have 
\[
\widehat{H}^{n}_{c}(\Om_{T}/\H_{T}) \cong H_{c}^{n}(T;\Om_{T})/\overline{L} \cong \RR\;.\qed
\]
\renewcommand{\qed}{}
\end{proof}

By Proposition~\ref{prop:max}, by decomposing $T/\H_{T}$ into the connected components, we have a natural map 
\begin{equation}\label{eq:pair2}
\Psi : \widehat{H}^{\bt}_{c}(\E/\H) \longrightarrow H_{\inv}^{n-\bt}(\H;\E^{*} \otimes \Om_{T})^{*}\;.
\end{equation}
Similarly we have
\begin{equation*}
\widehat{H}^{\bt}_{lc}(\E/\H) \longrightarrow H_{\inv, c}^{n-\bt}(\H;\E^{*} \otimes \Om_{T})^{*}\;,
\end{equation*}
where $H_{\inv, c}^{n-\bt}(\H;\E^{*} \otimes \Om_{T})$ is the cohomology of invariant forms such that the projection of the support to the space of orbit closures $T/\overline{\H_{T}}$ is compact, which is introduced in \cite{Sergiescu1985}.

Sections~\ref{sec:dp} and~\ref{sec:dR} will be devoted to proving the following result. 
\begin{thm}\label{thm:dualR}
Let $(\E,\nabla)$ be a flat vector bundle over an $n$-dimensional manifold $T$. Let $\H$ be a complete linear pseudogroup on $\E$ which induces a Riemannian pseudogroup $\H_{T}$ on $T$. Then we have 
\begin{align}
\widehat{H}^{\bt}_{c}(\E/\H) \cong & H_{\inv}^{n-\bt}(\H;\E^{*} \otimes \Om_{T})^{*}, \label{eq:dual2} \\
\widehat{H}^{\bt}_{lc}(\E/\H) \cong & H_{\inv, c}^{n-\bt}(\H;\E^{*} \otimes \Om_{T})^{*}. \notag
\end{align}
\end{thm}
The isomorphism \eqref{eq:dual2} is already known for the case of the trivial coefficient~\cite[p.~598]{AlvarezLopezMasa2008}. Theorem~\ref{thm:dfol} follows from this theorem, because the invariant cohomology of the holonomy pseudogroup of a foliated manifold $(M,\F)$ is naturally isomorphic to the basic cohomology of $(M,\F)$. Theorem~\ref{thm:dualR} will be proven by an argument similar to the proof of the twisted Poincar\'{e} duality of basic cohomology of Riemannian foliations due to Sergiescu~\cite[Th\'{e}or\`{e}me~I]{Sergiescu1985}.

\subsection{Duality for complete locally parallelizable pseudogroups}\label{sec:dp}

In this section, we show the following, which is the duality (Theorem~\ref{thm:dualR}) for complete linear pseudogroups on flat vector bundles which induce locally parallelizable pseudogroups on the base.
\begin{prop}\label{prop:dualp}
Let $(\E,\nabla)$ be a flat vector bundle over an $n$-dimensional manifold $T$. Let $\H$ be a complete linear pseudogroup on $\E$ which induces a locally parallelizable pseudogroup $\H_{T}$ on $T$. Then the pairing map $\Psi : \widehat{H}^{\bt}_{c}(\E/\H) \longrightarrow H_{\inv}^{n-\bt}(\H;\E^{*} \otimes \Om_{T})^{*}$ in~\eqref{eq:pair2} is an isomorphism.
\end{prop}

First we see the duality on the cochain level in the case where the base pseudogroup is of minimal $G$-Lie type.

\begin{lem}\label{lem:lied}
In addition to the hypothesis of the last proposition, assume further that, for a Lie group $G$, we have $T=G$ and $\H_{T}$ is of minimal $G$-Lie type. Then the map
\[
\Theta : \Omega^{\bt}_{c}(T;\E) \longrightarrow \Omega^{n-\bt}_{\inv}(\H;\E^{*} \otimes \Om_{T})^{*}
\]
defined by the pairing in~\eqref{eq:pair1} and the integration on $T$ induces an isomorphism $\widehat{\Omega}^{\bt}_{c}(\E/\H) \cong \big(\Omega^{n-\bt}_{\inv}(\H;\E^{*} \otimes \Om_{T})\big)^{*}$.
\end{lem}

\begin{proof}
First we show that $\Theta$ is surjective. Since the $\H_{T}$-orbits in $T$ are dense, $\H$-invariant forms valued in $\E^{*} \otimes \Om_{T}$ are determined by its value at the unit element $e \in G=T$. Thus we have $\dim \Omega^{n-\bt}_{\inv}(\H;\E^{*} \otimes \Om_{T}) < \infty$. Take a basis $\alpha_{1}$, $\ldots$, $\alpha_{N}$ of $\Omega^{n-\bt}_{\inv}(T;\E^{*} \otimes \Om_{T})$. Fix a density $\omega \in \Omega^{n}(T;\Om_{T})$ such that $\omega_{e} \neq 0$. Let $\{\rho_{k}\}$ be a sequence of compactly supported functions on $T$ such that $\{ f \mapsto \int_{T} f\rho_{k}\omega\} \subset \Omega^{0}(T)^{*}$ converges to the Dirac measure at $e$ as distributions and $\lim_{k \to \infty} \int_{T} \rho_{k} \omega > 0$. Take $\beta_{i} \in \Omega^{\bt}(T;\E)$ so that $(\beta_{i} \wedge \alpha_{j})_{e} = \delta_{ij} \omega_{e}$ for $i$, $j \in \{1, \ldots, N\}$. Then we have $\lim_{k \to \infty} \int_{T} \rho_{k} \beta_{i} \wedge \alpha_{j} = C \delta_{ij}$ for any $i$, $j \in \{1, \ldots, N\}$, where $C = \lim_{k \to \infty} \int_{T} \rho_{k} \omega$. Since $\rho_{k} \beta_{i} \in \Omega_{c}^{\bt}(T;\E)$, the subspace generated by $\Theta(\rho_{k} \beta_{1})$, $\ldots$, $\Theta(\rho_{k} \beta_{N})$ for $k \gg 0$ is equal to $\Omega^{n-\bt}_{\inv}(T;\E^{*} \otimes \Om_{T})^{*}$. Then $\Theta$ is surjective.

We will show that $\ker \Theta = \overline{\Lambda}^{\bt}_{c, \H}$. Recall that $\widehat{\Omega}_{c}^{\bt}(\E/\H) = \Omega^{\bt}_{c}(T;\E)/\overline{\Lambda}^{\bt}_{c, \H}$. By definition, it is easy to see that $\Lambda^{\bt}_{c, \H}$ is contained in $\ker \Theta$, and hence so is $\overline{\Lambda}^{\bt}_{c, \H}$. Take any $\alpha\in \Omega^{\bt}_{c}(T;\E)$ so that $\alpha \notin \overline{\Lambda}^{\bt}_{c, \H}$. By the Hahn-Banach theorem for $\Omega^{\bt}(T;\E)$ which is a Fr\'{e}chet space, there exists $v\in \Omega^{\bt}_{c}(T;\E)^{*}$ such that $v| \overline{\Lambda}^{\bt}_{c, \H} = 0$ and $v(\alpha) > 0$. Let $X_{1}$, $\ldots$, $X_{n}$ be an $\H$-invariant parallelism on $T$. Let $V$ be the real vector space generated by $X_{1}$, $\ldots$, $X_{n}$, and fix a volume form $\mu$ on $V$ of volume one such that the distribution $f \mapsto \int_{V}f\mu$ is sufficiently close to the Dirac measure at the origin of $V$. We will regularize this $v$ with a transverse version of the Sarkaria's operator $\hat{s} : \Omega^{\bt}_{c}(T;\E) \to \Omega^{\bt}_{c}(T;\E)$ (see \eqref{eq:So}). Recall that $\hat{s} = \Phi_{\mathcal{U}} \circ s$, where $\Phi_{\mathcal{U}}$ is an operator defined by a partition of unity defined by using a relatively compact set $U$ and $s$ is a diffusion operator defined by using $\mu$. Let $\hat{v} = v \circ \hat{s}$. Here, $\hat{v}$ satisfies
\begin{enumerate}
\item $\hat{v}| \overline{\Lambda}^{\bt}_{c, \H} = 0$,
\item $\hat{v}(\alpha) > 0$ and
\item $\exists v^{*} \in \Omega^{n-\bt}(T;\E^{*} \otimes \Om_{T})$ such that  $\hat{v} (\beta) = \int_{T} \beta \wedge v^{*}$ for $\forall \beta \in  \Omega^{\bt}_{c}(T;\E)$. 
\end{enumerate}
Indeed, since the parallelism $X_{1}$, $\ldots$, $X_{n}$ is $\H$-invariant, $s$ commutes with the $\H$-action. Thus we have $\hat{s} (\overline{\Lambda}^{\bt}_{c, \H}) \subset \overline{\Lambda}^{\bt}_{c, \H}$, which implies that $\hat{v}| \overline{\Lambda}^{\bt}_{c, \H} = 0$. Since the distribution associated with $\mu$ is close to the Dirac measure at the origin of $V$ as a distribution, we have $v(\hat{s} \alpha)$ is close to $v(\Phi_{\mathcal{U}} \alpha)$, which implies that $v(\Phi_{\mathcal{U}} \alpha) > 0$. Since $v(\Phi_{\mathcal{U}} \alpha) = v (\alpha)$ by $\Phi_{\mathcal{U}} \alpha - \alpha \in \Lambda^{\bt}_{c, \H}$ and $v| \overline{\Lambda}^{\bt}_{c, \H} = 0$, it follows that $\hat{v}(\alpha) > 0$. The property (iii) follows from the fact that $s$ is a diffusion operator (see \cite[Section 15]{deRham1984}). The $\H$-invariance of $\hat{v}$ implies that $v^{*}$ is $\H$-invariant. Thus, by $\hat{v}(\alpha) > 0$, it implies that $\alpha \notin \ker \Theta$. Thus we have that $\ker \Theta = \overline{\Lambda}^{\bt}_{c, \H}$, which implies that $\Theta$ induces an isomorphism $\widehat{\Omega}^{\bt}_{c}(\E/\H) \cong \Omega^{n-\bt}_{\inv}(\H;\E^{*} \otimes \Om_{T})^{*}$.
\end{proof}

\begin{cor}\label{cor:dualpc}
Under the hypothesis of Lemma \ref{lem:lied}, the pairing map $\widehat{H}^{\bt}_{c}(\E/\H) \longrightarrow H_{\inv}^{n-\bt}(\H;\E^{*} \otimes \Om_{T})^{*}$ in~\eqref{eq:pair2} is an isomorphism. 
\end{cor}

\begin{proof}
Since the cohomologies of these complexes are of finite dimension by Theorem~\ref{thm:findimp}, the duality $\widehat{H}^{\bt}_{c}(\E/\H) \cong H^{\bt}_{\inv}(\H;\E)$ follows from the universal coefficient theorem and the last lemma. 
\end{proof}

Now let us prove Proposition \ref{prop:dualp}. We can assume that $T/\H_{T}$ is connected. Thus there exists a Lie group $G$, a flat vector bundle $\E_{G} \to G$, and a linear pseudogroup $\H_{G}$ on $\E_{G}$ which satisfy the following conditions of Lemma \ref{lem:Salem}: for each point $z$ on $T$, let $U$ be an open disk neighborhood of $\pi(z)$ in $T/\overline{\H_{T}}$ in the lemma. Let $\E_{U} = \E|_{\pi^{-1}(U)}$ and $\H_{U} = \H|_{\E_{U}}$. The pseudogroup $\H_{U}$ is equivalent to a pseudogroup $\H_{G} \times 1_{U}$ on $\E_{G} \boxtimes \RR_{U}$. Thus the Haefliger cohomology and invariant cohomology of $\H_{U}$ are isomorphic to those of $\H_{G} \times 1_{U}$. Since the base pseudogroup of $\H_{G}$ is of minimal $G$-Lie type, we can apply Corollary \ref{cor:dualpc}. Then, since $\Om_{U}^{*} \otimes \Om_{U}$ is trivial and $\Om_{T} = \Om_{U} \otimes \Om_{G}$, by Poincar\'{e} lemmas, we have
\begin{multline}\label{eq:P}
\widehat{H}^{\bt}_{c}\big((\E_{G} \boxtimes \RR_{U})/(\H_{G} \times 1_{U})\big) \cong 
\widehat{H}^{\bt}_{c}\big(((\E_{G} \boxtimes \RR_{U}) \otimes  \Om_{U}^{*} \otimes \Om_{U})/(\H_{G} \times 1_{U})\big) \\
\cong \widehat{H}^{\bt - \dim U}_{c}\big((\E_{G} \otimes  \Om_{U}^{*})/\H_{G} \big) \cong 
\widehat{H}^{\dim G + \dim U - \bt}_{\inv}\big(\H_{G} ; \E_{G}^{*}  \otimes \Om_{U} \otimes \Om_{G} \big) \\
\cong H^{n - \bt}_{\inv}(\H_{G}; \E_{G}^{*} \otimes \Om_{T})^{*} \cong H^{n - \bt}_{\inv}\big(\H_{G} \times 1_{U}; (\E_{G} \boxtimes \RR_{U})^{*} \otimes \Om_{T} \big)^{*}.
\end{multline}
Thus we have $\widehat{H}^{\bt}_{c}(\E_{U}/\H_{U}) \cong H^{n - \bt}_{\inv}(\H_{U}; \E_{U}^{*} \otimes \Om_{T})^{*}$. Now the isomorphism $\widehat{H}^{\bt}_{c}(\E/\H) \cong H^{n - \bt}_{\inv}(\H; \E^{*}\otimes \Om_{T})^{*}$ follows from a standard argument with Mayer-Vietoris sequence with a base of the topology of $T/\overline{\H}$ consisting of open disks (see~\cite[Section 5.12]{GreubHalperinVanstone1972}). The proof of $\widehat{H}^{\bt}_{lc}(\E/\H) \cong H^{n - \bt}_{\inv, c}(\H; \E^{*}\otimes \Om_{T})^{*}$ is similar with the following computation, which corresponds to \eqref{eq:P}: 
\begin{multline*}
\widehat{H}^{\bt}_{lc}\big((\E_{G} \boxtimes \RR_{U})/(\H_{G} \times 1_{U})\big) \cong \widehat{H}^{\bt}_{c}(\E_{G}/\H_{G}) \cong H^{\dim G - \bt}_{\inv}(\H_{G} ; \E_{G}^{*} \otimes \Om_{G})^{*} \\ \cong H^{\dim G + \dim U - \bt}_{\inv, c}(\H_{G} \times 1_{U}; (\E_{G} \boxtimes \RR_{U})^{*} \otimes \Om_{G} \otimes \Om_{U})^{*} \cong H^{n - \bt}_{\inv, c}(\H_{U};\E_{U}^{*} \otimes \Om_{T})^{*}\,. 
\end{multline*}

\subsection{Duality for complete Riemannian pseudogroups}\label{sec:dR}

Let $\H$ be a linear complete pseudogroup on a flat vector bundle $(\E,\nabla) \to T$ which induces a Riemannian pseudogroup on $T$. Here we will deduce the duality (Theorem~\ref{thm:dualR}) for $\H$ from Proposition~\ref{prop:dualp} by comparing two spectral sequences in a way similar to~\cite[D\'{e}montration du th\'{e}or\`{e}me~I]{Sergiescu1985} and~\cite[Sections~4.7 and~4.8]{RoyoPrietoSaralegiWolak2009}. Let $T_{\orr}$ be the orientation covering of $T$. Let $\pi : T_{\#} \to T_{\orr}$ be a connected component of the orthonormal frame bundle of $T_{\orr}$. Let $\E_{\orr}$ (resp.\ $\E_{\#}$) be the flat vector bundles over $T_{\orr}$ (resp.\ $T_{\#}$) obtained by pulling back $\E$. Let $\H_{\orr}$ (resp.\ $\H_{\#}$) be a linear pseudogroup on $\E_{\orr}$ (resp.\ $\E_{\#}$) induced by $\H$. Let $n=\dim T$ and $n_{\#} = \dim T_{\#}$. We similarly define $\E^{*}_{\#}$, $\E^{*}_{\orr}$, $\Om_{T_{\#}}$ and $\Om_{T_{\orr}}$. First, we show that  the pairing map
\begin{equation}
\Psi : \widehat{H}^{\bt}_{c}(\E_{\orr}/\H_{\orr}) \longrightarrow H^{n-\bt}_{\inv}(\H_{\orr};\E^{*}_{\orr} \otimes \Om_{T_{\orr}})^{*}\; 
\end{equation}
for $\H_{\orr}$ (see \eqref{eq:pair2}) is an isomorphism. Consider a complex 
\begin{align*}
%C^{\bt}_{\orr} & = \Omega^{n - \bt}_{\inv}(\H_{\orr};\E^{*}_{\orr}\otimes \Om_{T_{\orr}})^{*}\;, 
C^{\bt}_{\#} = \Omega^{n_{\#} - \bt}_{\inv}(\H_{\#};\E^{*}_{\#}\otimes \Om_{T_{\#}})^{*}\;, 
\end{align*}
whose differential is dual to $d_{\nabla}$. Let $K = \SO(n)$ and $\k = \so(n)$. Since the $K$-action on $\E_{\#}$ commutes with $\H_{\#}$, the complexes $\widehat{\Omega}^{\bt}_{c}(\E_{\#}/\H_{\#})$ and $C^{\bt}_{\#}$ admit $K$-actions. These $K$-action induce the structure of $\k$-dgas on these complexes, which admits an algebraic connection. Here, recall that the spectral sequence associated with a $\k$-dga $A^{\bt}$ is given by the $\k$-invariant subalgebra $A_{\theta=0}^{\bt} = \{ \, \alpha \in A^{\bt} \mid \theta(X)(\alpha) = 0, \forall X \in \k \, \}$ with the filtration given by
\[
F^{s}A_{\theta=0}^{s+t} = \{ \, \alpha \in A_{\theta=0}^{s+t} \mid \iota(X_{0}) \circ \cdots \circ \iota(X_{t}) \alpha = 0, X_{i} \in \k \, \}\;.
\]
The $E_{2}^{s,t}$-term is isomorphic to $\widehat{H}^{s}(A^{\bt}_{\iota=0, \theta=0}) \otimes H^{t}(\k)$,where $A^{\bt}_{\iota=0, \theta=0}$ is the $\k$-basic subalgebra given by $A_{\iota=0, \theta=0} = \{ \, \alpha \in A^{\bt} \mid \iota(X)(\alpha) = 0, \theta(X)(\alpha) = 0, \forall X \in \k \, \}$ (see, for example, \cite[Corollary III in Section 9.5]{GreubHalperinVanstone1976}). Let $E_{r}^{s,t}$ be the spectral sequence associated with $\widehat{\Omega}^{\bt}_{c}(\E_{\#}/\H_{\#})$, which converges to $\widehat{H}^{\bt}_{c}(\E_{\#}/\H_{\#})$. Since $\widehat{\Omega}^{\bt}_{c}(\E_{\#}/\H_{\#})$ admits an algebraic connection and its $\k$-basic subalgebra is isomorphic to $\widehat{\Omega}^{\bt}_{c}(\E_{\orr}/\H_{\orr})$, we have
\begin{equation}\label{eq:e21} 
E_{2}^{s,t} \cong \widehat{H}^{s}_{c}(\E_{\orr}/\H_{\orr}) \otimes H^{t}(\k)\;.
\end{equation}
Similarly, consider the spectral sequence $\overline{E}_{r}^{s,t}$ associated with $C^{\bt}_{\#}$, which converges to $H^{n_{\#} - \bt}_{\inv}(\H_{\#};\E^{*}_{\#} \otimes \Om_{T_{\#}})^{*}$. The $\k$-basic subalgebra is isomorphic to $\Omega^{n-\bt}_{\inv}(\H_{\orr};\E^{*}_{\orr}\otimes \Om_{T_{\orr}})^{*}$ (see \cite[Lemme 2.8]{Sergiescu1985}) and we have 
\begin{equation}\label{eq:e22}
\overline{E}_{2}^{s,t} \cong H^{s}_{\inv}(\H_{\orr};\E^{*}_{\orr} \otimes \Om_{T_{\orr}})^{*} \otimes H^{t}(\k)\;. 
\end{equation}
Here $\Psi$ induces an isomorphism between the $E_{2}^{0,t}$-terms by $\widehat{H}^{0}_{c}(\E_{\#}/\H_{\#}) \cong H^{n_{\#}}_{\inv}(\H_{\#}; \E^{*}_{\#}\otimes \Om_{T_{\#}})^{*}$ and the following lemma. 

\begin{lem}\label{lem:comm}
For each $s$, we have a commutative diagram
\[
\xymatrix{ 
\widehat{\Omega}^{s}_{c}(\E_{\#}/\H_{\#})^{\k} \ar[rr]^<<<<<<<<<<{\Psi} && \big(\Omega^{n_{\#} - s}_{\inv}(\H_{\#};\E^{*}_{\#}\otimes \Om_{T_{\#}})^{*}\big)^{\k} \\
\widehat{\Omega}^{s}_{c}(\E_{\orr}/\H_{\orr})  \ar[rr]_<<<<<<<<<<<<{\Psi} \ar[u]^{\pi^{*}} && \Omega^{n - s}_{\inv}(\H_{\orr};\E^{*}_{\orr}\otimes \Om_{T_{\orr}})^{*} \ar[u]_{\textstyle (\fint_{\pi})^{*}}\;, 
}
\]
whose vertical maps are isomorphisms in the case where $s=0$. 
\end{lem}

\begin{proof}
To prove the commutativity, it suffices to show that $\int_{T_{\#}} \pi^{*}\alpha \wedge \beta = \int_{T} \alpha \wedge \big(\fint_{\pi} \beta\big)$ for $\alpha \in \Omega^{s}_{c}(T_{\orr};\E_{\orr})$ and $\beta \in \Omega^{n_{\#} -s}(T_{\#};\E_{\#})$. This equality follows from an equality $\fint_{\pi} (\pi^{*}\alpha \wedge \beta) = \alpha \wedge \fint_{\pi} \beta$, which is a well known property of the integration along fibers (see, for example, \cite[Proposition IX in Section 7.13]{GreubHalperinVanstone1972}). Let us consider the case where $s=0$. It is easy to see that $\pi^{*} : \Omega^{0}_{c}(T_{\orr};\E_{\orr}) \to \Omega^{0}_{c}(T_{\#};\E_{\#})^{\k}$ is an isomorphism and so is $\pi^{*} : \widehat{\Omega}^{0}_{c}(\E_{\orr}/\H_{\orr}) \to \widehat{\Omega}^{0}_{c}(\E_{\#}/\H_{\#})^{\k}$. Fix $\gamma \in \wedge^{\dim \k}\k^{*} \otimes \Om_{K}$ such that $\int_{K}\gamma = 1$. Let $\gamma_{\#}$ be the differential form on $T_{\#}$ valued in $\Om_{K}$ whose kernel is equal to the horizontal plane field of Levi-Civita connection, and whose restriction to each fiber of $\pi$ is equal to $\gamma$. We can see that any $\beta$ in $\Omega^{n_{\#}}(T_{\#};\E^{*}_{\#}\otimes \Om_{T_{\#}})^{\k}$ is of the form $\pi^{*}\beta' \wedge \gamma_{\#}$ for some $\beta' \in \Omega^{n}(T_{\orr};\E^{*}_{\orr}\otimes \Om_{T_{\orr}})$. By $\fint_{\pi} (\pi^{*}\beta' \wedge \gamma_{\#}) = \beta'$, we have that $\fint_{\pi}$ induces $\Omega^{n_{\#}}_{\inv}(\H_{\#}; \E^{*}_{\#}\otimes \Om_{T_{\#}})^{\k} \cong \Omega^{n}_{\inv}(\H_{\orr};\E^{*}_{\orr}\otimes \Om_{T_{\orr}})$. It follows that $(\fint_{\pi})^{*} : \Omega^{n}_{\inv}(\H_{\orr};\E^{*}_{\orr}\otimes \Om_{T_{\orr}})^{*} \to \big(\Omega^{n_{\#}}_{\inv}(\H_{\#}; \E^{*}_{\#}\otimes \Om_{T_{\#}})^{\k}\big)^{*}$ is an isomorphism. Here, by using the Hahn-Banach theorem for $\Omega^{n_{\#}}_{\inv}(\H_{\#}; \E^{*}_{\#}\otimes \Om_{T_{\#}})$, which is Frech\'{e}t, and the averaging operator on $K$, it is easy to see that the restriction map induces an isomorphism
\[
\big(\Omega^{n_{\#}}_{\inv}(\H_{\#}; \E^{*}_{\#}\otimes \Om_{T_{\#}})^{*}\big)^{\k} 
\cong
\big(\Omega^{n_{\#}}_{\inv}(\H_{\#}; \E^{*}_{\#}\otimes \Om_{T_{\#}})^{\k}\big)^{*}\;.
\]
Thus we have  $(\fint_{\pi})^{*}$ is an isomorphism.
\end{proof}

Since $\H_{\#}$ is complete and the base pseudogroup induced by $\H_{\#}$ is locally parallelizable, Proposition~\ref{prop:dualp} implies that $\Psi$ induces an isomorphism $\widehat{H}^{\bt}_{c}(\E_{\#}/\H_{\#}) \cong H^{n_{\#}-\bt}_{\inv}(\H_{\#}; \E^{*}_{\#} \otimes \Om_{T_{\#}})^{*}$ between the $E_{\infty}$-terms of the two spectral sequences. Then, by Zeeman's comparison theorem \cite{Zeeman1957}, $\Psi$ induces an isomorphism $E_{2}^{s,0} \cong \overline{E}_{2}^{s,0}$ for any $s$. Then, by \eqref{eq:e21} and~\eqref{eq:e22}, it follows that $\Phi$ is an isomorphism $\widehat{H}^{\bt}_{c}(\E_{\orr}/\H_{\orr}) \longrightarrow H^{n-\bt}_{\inv}(\H_{\orr};\E^{*}_{\orr} \otimes \Om_{T_{\orr}})^{*}$.

Since $\widehat{H}^{\bt}_{c}(\E/\H)$ and $H^{n-\bt}_{\inv}(\H;\E^{*} \otimes \Om_{T})^{*}$ are the $\ZZ/2\ZZ$-invariant part of the both sides of $\widehat{H}^{\bt}_{c}(\E_{\orr}/\H_{\orr}) \longrightarrow H^{n-\bt}_{\inv}(\H_{\orr};\E^{*}_{\orr} \otimes \Om_{T_{\orr}})^{*}$, respectively, it follows that $\widehat{H}^{\bt}_{c}(\E/\H) \cong H^{n-\bt}_{\inv}(\H;\E^{*} \otimes \Om_{T})^{*}$. The proof of $\widehat{H}^{\bt}_{lc}(\E/\H) \cong H^{n-\bt}_{\inv, c}(\H;\E^{*} \otimes \Om_{T})^{*}$ is similar.

%the image of 
%\begin{align*}
%\Omega^{0}_{c}(T_{\#};\E_{\#})^{\k} & \longrightarrow \big(\Omega^{n_{\#}}_{\inv}(\H_{\#}; \E^{*}_{\#}\otimes \Om_{T_{\#}})^{*}\big)^{\k}, 
%\\\Omega^{0}_{c}(T_{\orr};\E_{\orr}) & \longrightarrow \Omega^{n}_{\inv}(\H_{\orr};\E^{*}_{\orr}\otimes \Om_{T_{\orr}})^{*}
%\end{align*}
%is dense in $\Omega^{n}_{\inv}(\H_{\orr};\E^{*}_{\orr}\otimes \Om_{T_{\orr}})^{*}$. Then, since $\pi^{*} : \Omega^{0}_{c}(T_{\orr};\E_{\orr}) \to \Omega^{0}_{c}(T_{\#};\E_{\#})^{\k}$ is an isomorphism, for any $\phi \in \big(\Omega^{n_{\#}}_{\inv}(\H_{\#}; \E^{*}_{\#}\otimes \Om_{T_{\#}})^{*}\big)^{\k}$, there exists a sequence $\{\alpha_{k}\}$ in $\Omega^{0}_{c}(T_{\orr};\E_{\orr})$ such that 
%\[
%\phi (\zeta) = \lim_{k \to \infty} \int_{T} \alpha \wedge \Big(\fint_{\pi} \zeta \Big),  \forall \zeta \in \Omega^{n_{\#}}_{\inv}(\H_{\#}; \E^{*}_{\#}\otimes \Om_{T_{\#}}.
%\]
%Since $\fint_{\pi} : $
%That implies the surjectivity of $\widehat{\Omega}^{0}_{c}(\E_{\#}/\H_{\#})^{\k} \to \big(\Omega^{n_{\#}}_{\inv}(\H_{\#}; \E^{*}_{\#}\otimes \Om_{T_{\#}})^{*}\big)^{\k}$.

\section{Applications}

We will show some direct consequences obtained by combing Theorems~\ref{thm:masap} and~\ref{thm:dualR} with known results on invariant cohomology.

For a complete linear pseudogroup $\H$ on a flat vector bundle $(\E,\nabla)$ over a manifold $T$ which induces a Riemannian pseudogroup on $T$, provided that the orbit space of the base pseudogroup is connected, the pseudogroup version of the argument due to El Kacimi Alaoui-Sergiescu-Hector~\cite{ElKacimiAlaouiSergiescuHector1985} implies that $H^{0}_{\inv}(\H;\E) \cong \RR$ or $0$. Then, Theorems~\ref{thm:masap} and~\ref{thm:dualR} imply the following.

\begin{cor}
Let $(\E,\nabla)$ be a flat vector bundle over a manifold $T$ with a complete linear pseudogroup $\H$ on $\E$ which induces a Riemannian pseudogroup $\H_{T}$ on $T$. If $T/\H_{T}$ is connected and $\rank \E=1$, then $H^{0}_{c}(\E/\H)$ is isomorphic to $\RR$ or $0$.
\end{cor}

The following corollary is the pseudogroup version of Corollary~\ref{cor:inv}, from which Corollary~\ref{cor:inv} follows.

\begin{cor}\label{cor:inv2}
Let $(\E,\nabla)$ be a flat vector bundle over an $n$-dimensional manifold $T$. Let $\H$ be a linear pseudogroup on $\E$ which induces a complete Riemannian pseudogroup $\H_{T}$ on $T$, and $\P$ the Sergiescu's orientation sheaf of $\H_{T}$. Assume that the orbit space $T/\H_{T}$ is connected. Then we have the following.
\begin{enumerate}
\item If $\H$ is complete, then we have $H^{\bt}_{c}(\E/\H) \cong H^{n-\bt}_{lc}((\E^{*} \otimes \P)/\H)^{*}$.
\item $H^{0}_{lc}(\P/\H_{T}) \cong \RR$.
\item $H^{0}_{c}(\P/\H_{T}) \cong \RR$ if and only if the space of orbit closures $T/\overline{\H_{T}}$ is compact.
\end{enumerate}
\end{cor}
Corollary~\ref{cor:inv2} follows from Theorems~\ref{thm:masap} and \ref{thm:dualR}, and the following result on the invariant cohomology of complete Riemannian pseudogroups.
\begin{thm}[(ii) is due to {\cite[Proposition~3.2.9.1]{Haefliger1985}}]\label{thm:fin}
Under the same assumption with Corollary~\ref{cor:inv2}, the following holds: 
\begin{enumerate}
\item $H^{\bt}_{\inv}(\H;\E) \cong H^{n-\bt}_{\inv, c}(\H;\E^{*} \otimes \P)^{*}$.
\item $H^{n}_{\inv, c}(\H; \P) \cong \RR$.
\item $H^{n}_{\inv}(\H; \P) \cong \RR$ if and only if the space of orbit closures $T/\overline{\H_{T}}$ is compact.
\end{enumerate}
\end{thm}
Theorem~\ref{thm:fin}-(i) is proven by the pseudogroup version of the argument of the proof of Sergiescu~\cite[Th\'{e}or\`{e}me I]{Sergiescu1985}. If $T/\overline{\H_{T}}$ is compact, then we have $H^{n}_{\inv}(\H; \P) \cong H^{n}_{\inv, c}(\H; \P) \cong \RR$. To show the converse, note that $H^{n}_{\inv}(\H,\P) \cong H^{0}_{\inv, c}(\H;\P)^{*}$ by (ii). Here $H^{0}_{\inv, c}(\H;\P)$ is the space of $\H$-invariant parallel sections of $\P$ which is compactly supported. Thus, if $T/\overline{\H_{T}}$ is not compact, then we have $H^{0}_{\inv, c}(\H;\P) \cong 0$, which implies the converse.

The invariant cohomology of complete Riemannian pseudogroups with trivial coefficient is a topological invariant by \cite[Corollary 27.3 and Theorem 29.1]{AlvarezLopezMasa2008}. Topological invariance of the invariant cohomology with values in flat vector bundles is similarly proven by using their results. Combining it with Theorems \ref{thm:masap} and \ref{thm:dualR}, we have the following, which says that $H^{\bt}_{c}(\E/\H)$ and $H^{\bt}_{lc}(\E/\H)$ are topological invariants for complete Riemannian pseudogroups.
\begin{cor}
Let $\pi_{i} : (\E_{i},\nabla_{i}) \to T_{i}$ be a flat vector bundle with a complete linear pseudogroup $\H_{i}$ $(i=1,2)$. We have $H^{\bt}_{c}(\E_{1}/\H_{1}) \cong H^{\bt}_{c}(\E_{2}/\H_{2})$ and $H^{\bt}_{lc}(\E_{1}/\H_{1}) \cong H^{\bt}_{lc}(\E_{2}/\H_{2})$ if there exist an open covering $\{V^{j}\}$ of $T_{1}$, an open covering $\{W^{j}\}$ of $T_{2}$ and a $C^{0}$ bundle homeomorphism $\Phi^{j} : \E_{1}|_{V^{j}} \to \E_{2}|_{W^{j}}$ such that  
\begin{enumerate}
\item $\Phi^{j}$ maps local parallel sections of $(\E_{1},\nabla_{1})|_{V^{j}}$ to those of $(\E_{2},\nabla_{2})|_{W^{j}}$  and
\item $\{\Phi^{j}\}$ generates an equivalence between $\H_{1}$ and $\H_{2}$ in the sense of Haefliger.
\end{enumerate}
\end{cor}

Finally we prove the following result, which implies Corollaries \ref{cor:st} and \ref{cor:masa}.

\begin{thm}
Any complete Riemannian foliation is strongly tense with respect to the Sergiescu's orientation sheaf.
\end{thm}

This theorem follows from Theorem \ref{thm:domi2}, Proposition~\ref{prop:fincov} and the following lemma. 

\begin{lem}
Let $\H$ be a complete Riemannian pseudogroup on an orientable manifold $T$. Let $\P$ be the Sergiescu's orientation sheaf of $\H$ over $T$. Then there exists a nonnegative section $\zeta \in \Omega^{0}_{lc}(T;\P)$ such that $\supp_{+}^{T}\zeta$ intersects all $\H$-orbits.
\end{lem}

\begin{proof}
Note that $\H$ has a natural linear action on $\P$. First consider the case where $\H$ is of minimal $G$-Lie type for a Lie group $G$. In this case, $\P$ is the flat real line bundle over $G$ given as $\det \mg^{-}$, where $\mg^{-}$ is the Lie algebra of right invariant vector fields on $G$ by definition of Molino's commuting sheaf. By the description of $\H$-invariant cochains in Eq.\ \eqref{eq:co0} and by Lemma~\ref{lem:lied}, the space of $\H$-invariant $0$-cochains $\widehat{\Omega}^{0}_{c}(\P/\H) \cong \Omega^{0}_{\inv}(\H;\P)^{*} \cong (\Omega^{0}(G;\P)^{G})^{*} \cong \big(\det (\mg^{-})^{G}\big)^{*}$ is at most of dimension $1$. Then, since $\widehat{H}^{0}_{c}(\P/\H) \cong \widehat{H}^{0}_{lc}(\P/\H) \cong \RR$ by Corollary \ref{cor:inv2}-(ii), the projection of any element $\Omega^{0}_{c}(G;\P)$ to $\widehat{\Omega}^{0}_{c}(\P/\H)$ is closed. Take a nonnegative section $\zeta$ of $\P$, which is not zero at a point. Then $d_{\nabla}\zeta \in \overline{\Lambda}^{1}_{c, \H}$. Consider an exact sequence 
\[
\xymatrix{ 0 \ar[r] & \overline{\Lambda}^{\bt}_{c, \H}/\Lambda^{\bt}_{c, \H} \ar[r] & \Omega_{c}^{\bt}(\P/\H) \ar[r]^{\tau} & \widehat{\Omega}_{c}^{\bt}(\P/\H) \ar[r] & 0\;.}
\]
Since $\tau$ induces an isomorphism on the cohomology by Theorem \ref{thm:masap}, it follows that $\overline{\Lambda}^{\bt}_{c, \H}/\Lambda^{\bt}_{c, \H}$ is acyclic. By $d_{\nabla}(d_{\nabla}\zeta)=0$, it follows that the projection $d_{\nabla}\hat{\zeta}$ of $d_{\nabla}\zeta$ to $\overline{\Lambda}^{1}_{c, \H}/\Lambda^{1}_{c, \H}$ is closed. Then there exists $\hat{\alpha} \in \overline{\Lambda}^{0}_{c, \H}/\Lambda^{0}_{c, \H}$ such that $d_{\nabla}\hat{\alpha} = d_{\nabla}\hat{\zeta}$. Then, taking a lift $\alpha$ of $\hat{\alpha}$, we have $d_{\nabla}(\zeta - \alpha) \in \Lambda^{0}_{c, \H}$. 

Here we will modify $\zeta$ and $\alpha$ to obtain a nonnegative element in $\Omega^{0}_{c}(T;\P)$ whose differential belongs to $\Lambda^{1}_{c, \H}$. Since $\Lambda^{0}_{c, \H}$ is dense in $\overline{\Lambda}^{0}_{c, \H}$, there exists a sequence $\{\alpha_{i}\}$ in $\Lambda^{0}_{c, \H}$ which converges to $\alpha$. By the definition of the topology, there exists a relatively compact subset $K$ of $G$ such that $\supp \alpha_{i} \subset K$ for large enough $i$ and $\lim_{i \to \infty}\|\alpha - \alpha_{i}\|_{k} = 0$ for any $k$, where $\|\cdot\|_{k}$ is a $C^{k}$-norm on $\Omega_{c}^{\bt}(K;\P|_{K})$. Here, by using an argument in \cite[Proof of Theorem 4.1]{Haefliger1980}, we will see that there exists $\theta \in \Lambda^{0}_{c, \H}$ such that $\zeta + \theta$ is positive on $K$. Take $h_{1}$, $\ldots$, $h_{\ell} \in \H$ so that 
\begin{enumerate}
\item $K \subset \cup_{j} \Dom h_{j}$,
\item $\zeta|_{h_{j}(U_{j})}$ is positive and
\item there exists a compact subset $A_{j} \subset \Dom h_{j}$ $(j=1,\ldots,\ell)$ such that $K \subset \cup_{j} A_{j}$.
\end{enumerate}
Take a partition of unity $\{\rho_{j}\}_{j=1}^{\ell} \cup \{\tau\}$ subordinate to $\{h_{j}(U_{j})\} \cup \{T\setminus \cup_{j}h_{j}(A_{j})\}$ such that $\rho_{j}$ is positive on $h_{j}(A_{j})$ $(j=1,\ldots,\ell)$. Let 
\[
\theta = \sum_{j=1}^{\ell} h_{j}^{*} (\rho_{j}\zeta) - \sum_{j=1}^{\ell} \rho_{j}\zeta\;.
\] 
Then $\theta \in \Lambda^{0}_{c, \H}$ and $\zeta + \theta$ is positive on $K$. Since $\lim_{i \to \infty}\|\alpha - \alpha_{i}\|_{0} = 0$, it follows that $\zeta + \theta - (\alpha - \alpha_{i})$ is nonnegative for $i \gg 0$. We also have $d_{\nabla}(\zeta + \theta - (\alpha - \alpha_{i})) = d_{\nabla}(\zeta - \alpha) + d_{\nabla}\theta - d_{\nabla}\alpha_{i} \in \Lambda^{1}_{c, \H}$. Then the lemma is proved for the case where $\H$ is of minimal $G$-Lie type.

We can assume that $T/\H$ is connected to prove the result. Consider the case where $\H$ is complete and locally parallelizable. Let $\pi : T \to T/\overline{\H}$ be the canonical projection. Since $T/\H$ is connected, by Lemma \ref{lem:Salem}, there exists a Lie group $G$, a flat vector bundle $\P_{G} \to G$ and a linear pseudogroup $\H_{G}$ on $\P_{G}$ which satisfy the following condition: for each point $z$ on $T$, there exists an open disk neighborhood $U$ of $\pi(z)$ in $T/\overline{\H}$ such that the pseudogroup $\H|_{\P|\pi^{-1}(U)}$ is equivalent to a pseudogroup on $\P_{G} \boxtimes \RR_{U}$ which is the product of $\H_{G}$ and the trivial pseudogroup on $U$. Here, since the linear $\H_{G}$-action on $\P_{G}$ is equivalent to the linear $(\H|_{\pi^{-1}(z)})$-action on $\P|_{\pi^{-1}(z)}$, and $\P|_{\pi^{-1}(z)}$ is the Sergiescu's orientation sheaf of $\H|_{\pi^{-1}(z)}$, it follows that $\P_{G}$ is the Sergiescu's orientation sheaf of $\H_{G}$, which is of minimal $G$-Lie type.

Thus we have a locally finite open covering $\{U_{s}\}$ of $T/\overline{\H}$ such that the pseudogroup $\H|_{\P|\pi^{-1}(U_{s})}$ is equivalent to a pseudogroup $\H_{G} \times 1_{U_{s}}$ on $\P_{G} \boxtimes \RR_{U_{s}}$. Let $\P_{U_{s}} = \P|_{\pi^{-1}(U_{s})}$ and $\H_{U_{s}} = \H|_{\P_{U_{s}}}$. Let $\Phi = \{\Phi_{s}^{j}\}$ be the equivalence from $\H_{U_{s}}$ to $\H_{s} \times 1_{U_{s}}$, which consists of isomorphisms of flat vector bundles. Let $\{\upsilon_{s}^{j}\}$ be the partition of unity on $G \times U_{s}$ subordinate to $\{\image (\Phi_{s}^{j})_{T}\}$, where $(\Phi_{s}^{j})_{T}$ is the map induced on $T$ by $\Phi_{s}^{j}$. Consider
\[
\begin{array}{cccc}
\Phi_{s}^{*} & : \Omega^{0}_{lc}(G \times U_{s} ; \P_{G} \boxtimes \RR_{U_{s}}) & \longrightarrow & \Omega^{0}_{lc}(\pi^{-1}(U_{s});\P_{U_{s}})  \\
& \sigma & \longmapsto & (\Phi_{s}^{j})^{*} \upsilon_{s}^{j}\sigma\,.
\end{array}
\]
Take a partition of unity $\{\rho_{s}\}$ on $T/\overline{\H}$ subordinate to $\{U_{s}\}$. Since $\H_{s}$ is of minimal $G$-Lie type, there exists a nonzero nonnegative element $\zeta_{s}$ in $\Omega^{0}_{c}(\P_{s}/\H_{s})$ such that $d_{\nabla}\zeta_{s} \in \Lambda^{1}_{c, \H_{s}}$. Let $\zeta = \sum_{s} \Phi_{s}^{*} \big( (\pi^{*}\rho_{s}) (p_{1}^{*}\zeta_{s}) \big)$, where $p_{1} : G \times U_{s} \to G$ is the first projection. Then $\zeta$ is a nonzero nonnegative section in $\Omega^{0}_{lc}(T;\P)$ such that $d_{\nabla}\zeta \in \Lambda^{1}_{lc, \H}$ and $\supp_{+}^{T}\zeta$ intersects all orbits of $\H$. 

Finally consider the general case where $\H$ is a complete Riemannian pseudogroup. Let $T_{\#}\to T$ be a connected component of the orthonormal frame bundle over $T$. Let $\H_{\#}$ be the pseudogroup on $T_{\#}$ induced from $\H$. Let $\P_{\#}$ be the Sergiescu's orientation sheaf of $\H_{\#}$. Since $\H_{\#}$ is locally parallelizable by Salem's theorem (Theorem \ref{thm:Molino}), there exists a nonnegative section $\zeta_{\#}$ in $\Omega^{0}_{lc}(T_{\#};\P_{\#})$ such that $d_{\nabla}\zeta_{\#} \in \Lambda^{1}_{lc, \H_{\#}}$ and $\supp_{+}^{T_{\#}}\zeta_{\#}$ intersects any orbit of $\H_{\#}$. Here, by definition of Molino's central sheaf, it is easy to see that $\P$ is obtained as the quotient of $\P_{\#}$ by the natural $\SO(n)$-action (see \cite[Section 5.3]{Molino1988}). By integrating $\zeta_{\#}$ along the fibers of $T_{\#}\to T$, we get $\zeta$ which satisfies the conditions.
\end{proof}

\section{Examples of open foliated manifolds}\label{sec:9}

We illustrate difference of tautness or tenseness between closed and open foliated manifolds with some examples. 

In Theorem \ref{cor:masa}, we generalized one direction of Masa's characterization of tautness of Riemannian foliations~\cite[Minimality theorem]{Masa1992} to complete Riemannian foliations. But the other direction can fail for simple examples of Riemannian foliations on open manifolds.

\begin{prop}\label{prop:lieex}
Let $\G$ be a finitely generated subgroup of a Lie group $G$. Then the pseudogroup generated by the left $\G$-action on $G$ is equivalent to a holonomy pseudogroup of a taut foliation on an open manifold.
\end{prop}

\begin{proof}
Take a surjective homomorphism $F_{m} \to \G$, where $F_{m}$ is a free group of rank $m$. Take a genus $2m$ closed surface $\Sigma$ and the suspension foliation for $\pi_{1}\Sigma \to F_{m} \to \Gamma$. Since any suspension foliation is taut, the proof is concluded.
\end{proof}

If $G$ is not unimodular, then we have $H_{c}^{0}(\Tr \F) = H_{lc}^{0}(\Tr \F) = 0$ for any minimal $G$-Lie foliation $\F$ by Corollary~\ref{cor:Lie}. By Proposition~\ref{prop:lieex}, we get the following.
\begin{ex}\label{ex:con}
There are taut Riemannian foliations $\F$ on open manifolds $M$ such that $H_{c}^{0}(\Tr \F)  = H_{lc}^{0}(\Tr \F) = 0$. Equivalently, there are taut Riemannian foliations $\F$ on open manifolds $M$ whose Sergiescu's orientation sheaf is nontrivial.
\end{ex}

As already mentioned in the introduction, a tense metric on a closed manifold with a Riemannian foliation is always strongly tense by~\cite[Eq.~4.4]{KamberTondeur1983}, while the example of Cairns-Escobales~\cite[Example~2.4]{CairnsEscobales1997} shows that it is not true in general for the open case. There is a further difference in this direction. The cohomology class of the mean curvature form of a tense metric on a Riemannian foliation on a compact manifold. More generally, the \'{A}lvarez class, the cohomology class of the basic component of the mean curvature form $\kappa$, is well-defined for a Riemannian foliation on a compact manifold by~\cite[Theorem~5.2]{AlvarezLopez1992}. However the following example shows that the cohomology classes of the closed basic mean curvature form depend on metrics for Riemannian foliations on open manifolds.

\begin{prop}
Consider a foliation $\F$ on $\RR \times S^{1}$ whose leaves are the fibers of the second projection $\RR \times S^{1} \to S^{1}$. For any $t\in \RR^{\times}$, there exists a strongly tense bundle-like metric on $(M,\F)$ whose mean curvature form $\kappa$ satisfies $\int_{S^{1}}\kappa = t$.
\end{prop}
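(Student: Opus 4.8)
The plan is to work in the global coordinates $x$ on $\RR$ and $\theta$ on $S^{1}=\RR/2\pi\ZZ$, so that the leaves of $\F$ are the curves $\{\theta=\mathrm{const}\}$ and $T\F=\langle\partial_{x}\rangle$. A one-form is basic exactly when it equals $c(\theta)\,d\theta$ for a function $c$ of $\theta$ alone, and any such form is automatically $d$-closed; hence a metric on $(M,\F)$ is strongly tense if and only if its mean curvature form $\kappa$ has this shape, in which case $\int_{S^{1}}\kappa=\int_{0}^{2\pi}c(\theta)\,d\theta$. So the problem reduces to producing, for each $t\in\RR^{\times}$, a bundle-like metric whose mean curvature form is $c(\theta)\,d\theta$ with $\int_{0}^{2\pi}c=t$.

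First I would record why the naive attempts fail, since this locates where the noncompactness must be used. For a product (diagonal) bundle-like metric $g=\phi(\theta)^{2}\,dx^{2}+d\theta^{2}$ the characteristic form is $\chi=\phi\,dx$, and Rummler's formula (Lemma~\ref{lem:R}) gives $\kappa=-\partial_{\theta}(\log\phi)\,d\theta$. Its integral over $S^{1}$ vanishes because $\log\phi$ is a globally defined, hence $\theta$-periodic, function. Thus no product metric realizes $t\neq 0$, and one is forced to break the product structure with a cross term that grows along the noncompact leaf direction $\RR$; this is precisely the mechanism by which the uniqueness of the basic component of $\kappa$ from the compact case fails here.

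For the construction I would set $k=t/2\pi$ and take $g=dx^{2}+2kx\,dx\,d\theta+(k^{2}x^{2}+1)\,d\theta^{2}$, whose coefficients are independent of $\theta$ and so descend to $\RR\times S^{1}$. Positive-definiteness is immediate from $\det g=1$. For bundle-likeness I would note that the $g$-orthogonal complement of $\partial_{x}$ is spanned by $N=\partial_{\theta}-kx\,\partial_{x}$ with $g(N,N)=1$, so the induced transverse metric is the constant $d\theta^{2}$ and is holonomy-invariant. The unit leafwise field is $\partial_{x}$, and the characteristic form determined by $\chi(\partial_{x})=1$ and $\chi(N)=0$ is $\chi=dx+kx\,d\theta$. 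Then $d\chi=k\,dx\wedge d\theta$, and Lemma~\ref{lem:R} yields $\kappa=k\,d\theta$. This $\kappa$ is basic and closed, so $g$ is a strongly tense bundle-like metric, and $\int_{S^{1}}\kappa=2\pi k=t$.

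The only genuine difficulty is the conceptual one isolated in the second paragraph: realizing that product metrics are obstructed and that the compensating cross term must be unbounded in $x$. Once the metric is written down, checking positive-definiteness, confirming that the non-product metric is still bundle-like, and computing $\kappa$ via Rummler's formula are routine; I would take a little care over the bundle-likeness verification, since $g$ is not of product type.
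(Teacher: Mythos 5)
Your proof is correct and is essentially the paper's argument made explicit: your characteristic form $\chi = dx + kx\,d\theta$ is exactly the flat $\RR$-connection form with holonomy $e^{t}$ that the paper feeds into Rummler's formula, and your metric $g=(dx+kx\,d\theta)^{2}+d\theta^{2}$ is the corresponding bundle-like metric. If anything, your parameterization $k=t/2\pi$ yields $\int_{S^{1}}\kappa=t$ on the nose (and verifies bundle-likeness explicitly), whereas the paper's holonomy-$t$ normalization produces $\int_{S^{1}}\kappa=\log|t|$.
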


\begin{proof}
For $t\in \RR^{\times}$, let $\chi_{t}$ be a flat $\RR$-connection form whose holonomy homomorphism is $h_{t}: \pi_{1}S^{1} \to \Aut(\RR)$ determined by $h_{t}(\gamma) = t$, where $\gamma$ is a generator of $\pi_{1}S^{1}$. Then, by Rummler's formula~\eqref{eq:R}, $\chi_{t}$ is the characteristic form of a strongly tense metric on $(M,\F)$ such that the cohomology class of the mean curvature form $\kappa$ is given by $\int_{\gamma} \kappa = \log |t|$.
\end{proof}

Note that Royo Priento-Saralegi-Wolak \cite{RoyoPrietoSaralegiWolak2018} proved that the \'{A}lvarez class is well-defined for singular Riemannian foliations.

In~\cite{Nozawa2012}, it was proven that tautness of one dimensional Riemannian foliations on closed manifolds is invariant under deformation. This is not true in general for complete Riemannian manifolds on open manifolds as the following example shows. 

\begin{ex}
Let $A \in \SL(2;\ZZ)$ be a hyperbolic matrix and $T^{3}_{A}$ be the mapping torus. Let $\mathcal{G}$ be the non-taut Riemannian flow on $T^{3}_{A}$ considered by Carri\`{e}re~\cite{Carriere1984}. Let $\F_{0}$ be the Riemannian flow on $T^{3}_{A} \times \RR$ such that $\F|_{T^{3}_{A} \times \{s\}} = \mathcal{G}$ for any $s$. Let $\F_{1}$ be the Riemannian flow on $T^{3}_{A} \times \RR$ defined by $T^{3}_{A} \times \RR = \bigsqcup_{x \in T^{3}_{A}} \{x\}  \times \RR$. Let $X$ be a nonsingular vector field tangent to $\mathcal{G}$. For $0 \leq t \leq 1$, consider a vector field $X_{t}$ on $T^{3}_{A} \times \RR$ given by 
\[
X_{t}(x,s)= X(x) \cos (t\pi/2) + \frac{d}{ds} \sin(t\pi/2),
\]
which defines a Riemannian flow $\F_{t}$ on $T^{3}_{A} \times \RR$. Here $\F_{t}$ is taut for $0 < \forall t \leq 1$. It is easy to see that $\F_{t}$ is an $\RR$-bundle and hence it is taut for for $0 < t \leq 1$, while $\F_{0}$ is not taut.
\end{ex}

\end{document}